\newtheorem{theorem}{Theorem}[section]
\newtheorem{lemma}{Lemma}[section]
\newtheorem{proposition}{Proposition}[section]
\newtheorem{main theorem}{Main Theorem}[section]
\newcommand{\pa}{\partial}
\newcommand{\nn}{\nonumber}
\title[continuously non-extendable mappings]
{continuously non-extendable mappings between generalized complex ellipsoids of 
different dimensions}
\author{Atsushi Hayashimoto}
\address{Atsushi Hayashimoto: 716 Tokuma, Nagano 381-8550, Japan}
\email{atsushi@nagano-nct.ac.jp}
\keywords{generalized pseudoellipsoids, proper holomorphic mappings}
\subjclass[2020]{32H35, 32H40}
\begin{document}

\maketitle
\begin{abstract}
There exists a proper holomorphic mapping 
between balls of different dimensions such that 
it does not extend continuously to the boundary. 
The aim of this paper is to show the same phenomenon occurs for 
pseudoconvex domains of different dimensions. 
\end{abstract}

\setcounter{footnote}{-1}
\footnote
{This work was supported by Grant-in-Aid for Scientific Research (C) 17K05308 
and 22K03364}

\section{Introduction} 
Let $D_1, D_2$ be domains in complex Euclidean spaces and 
$F:D_1 \to D_2$ a proper holomorphic mapping. 
If both domains are of the same dimensions, 
then, under the assumptions of pseudoconvexity and 
regularity of the boundaries, the mapping extends to the closure of the source domain 
as a $C^k$ mapping.  
These kinds of theorems were proved by e.g. Diederich-Fornaess \cite{DF}, Bell-Catlin \cite{BC}. 
In contrast to the equi-dimensional case,  
the higher codimensional case, which means that the target dimension is 
bigger than the source one, has not been studied much yet 
and some known results state that, under certain conditions on the domains, 
there exists a proper holomorphic mapping 
between them such that it can not be extended $C^k$ manner across the boundary. 
For example, F.~Forstneric \cite{F} proved that
\begin{theorem}
For each integer $n \geq 1$, there is a proper holomorphic 
embedding $F : B^n \to B^N, N=n+1+2s$, where $s=s(n)$ is determined by $n$, such that $F$ 
does not extend continuously to $\overline{B^n}$. 
\end{theorem}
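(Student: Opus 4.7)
The plan is to start with a proper holomorphic embedding $F_0:B^n\to B^N$ of the ball into a higher-dimensional ball, then twist one of its coordinates by an inner function $I:B^n\to\mathbb{D}$. The twist preserves properness, because $|I^*|=1$ almost everywhere on $\partial B^n$, while importing the wild boundary behavior of $I$ into $F$.

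First, I would invoke the existence of a proper holomorphic (in fact polynomial) embedding $F_0:B^n\to B^N$ with $N=n+1+2s$. The precise value $s=s(n)$ is determined by the minimal target dimension in which $B^n$ admits a proper polynomial embedding; this numerical fact is what forces the particular form of $N$. One distinguished coordinate, say $F_0^{(j)}$, may be arranged not to vanish on any subset of $\partial B^n$ of positive measure, which is automatic for polynomial components.

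Second, I would invoke the existence of an inner function $I:B^n\to\mathbb{D}$, established by Aleksandrov and by Hakim--Sibony--L\o w. Such an $I$ is bounded holomorphic with $|I^*|=1$ a.e.\ on $\partial B^n$ but admits no continuous boundary extension; in fact its cluster set at almost every boundary point equals $\overline{\mathbb{D}}$.

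Now set $F:=F_0$ with its $j$-th coordinate replaced by $I\cdot F_0^{(j)}$. Since $|I|<1$ inside $B^n$, one has $|F|^2=|F_0|^2-(1-|I|^2)|F_0^{(j)}|^2<|F_0|^2<1$, so $F$ maps into $B^N$. On $\partial B^n$, $|I^*|=1$ and $|F_0^*|=1$ a.e., yielding $|F^*|=1$ a.e., hence properness. Injectivity and the immersion property of $F$ follow from those of $F_0$ together with a generic choice of $I$. For non-extension, at a boundary point $\zeta$ where $I$ has cluster set $\overline{\mathbb{D}}$ and $F_0^{(j)}$ has a nonzero radial limit, the $j$-th component of $F$ has a nondegenerate cluster set, so $F$ cannot extend continuously to $\zeta$. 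The main obstacle is the starting point: building $F_0$ in the sharp dimension $N=n+1+2s$, which relies on a careful construction of proper polynomial embeddings between balls; everything else is a direct and essentially formal consequence once $F_0$ and $I$ are in hand.
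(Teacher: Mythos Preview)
The paper does not prove this statement; it quotes it as Forstneri\v{c}'s theorem from [F] and then adapts the underlying technique to pseudoellipsoids in its Main Theorem. So there is no ``paper's own proof'' of this particular statement to compare against, though the construction in Sections~2--5 follows Forstneri\v{c}'s template.

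More importantly, your argument has a genuine gap at the step ``yielding $|F^*|=1$ a.e., hence properness.'' Having radial boundary values of modulus $1$ almost everywhere is \emph{not} properness. Properness requires $|F(z)|\to 1$ along \emph{every} approach to $\partial B^n$, not merely along almost every radius. For a nonconstant inner function $I$ on $B^n$ (any $n\ge 1$) that fails to extend continuously, the cluster set at some boundary point $\zeta$ contains interior points of $\mathbb{D}$; for $n\ge 2$ the cluster set at \emph{every} boundary point is all of $\overline{\mathbb{D}}$. Choose $\zeta$ with $F_0^{(j)}(\zeta)\ne 0$ and a sequence $z_k\to\zeta$ with $I(z_k)\to 0$. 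Then
\[
|F(z_k)|^2 \;=\; |F_0(z_k)|^2 - \bigl(1-|I(z_k)|^2\bigr)\,|F_0^{(j)}(z_k)|^2 \;\longrightarrow\; 1 - |F_0^{(j)}(\zeta)|^2 \;<\; 1,
\]
so $F$ is not proper. The very wildness of $I$ that you invoke for non-extension simultaneously destroys properness; under your twist the two requirements are in direct conflict. (Conversely, if $|I(z)|\to 1$ uniformly then $\log|I|$ is a bounded harmonic function vanishing at the boundary, hence $I$ is constant.)

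The actual mechanism, visible in the paper's proof of its Main Theorem, avoids this trap. One sets $F_0=(0,\dots,0,h)$ where $h$ is holomorphic on the domain, $h$ does \emph{not} extend continuously to the closure, but $\|h\|^2$ \emph{does} extend continuously (in the paper, $h(Z)=\iota\bigl(Z,\exp(\tilde u+i\tilde v)\bigr)$ with $\tilde v$ a harmonic conjugate whose boundary trace is discontinuous). One then builds $F=F_0+\sum_\ell G_\ell$ by adding entire corrections $G_\ell$ manufactured from peak functions, arranged so that $|||F_\ell|||^{2\beta}\to 1$ uniformly on the boundary; this forces properness directly. The bad component $h$ rides along unchanged in the last slot, so $F$ inherits its failure of continuous extension. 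The structural point you are missing is that the discontinuity must live in a component whose \emph{modulus} is tame, so that it contributes controllably to the target's defining function and does not obstruct properness.
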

While known non-extendability theorems, e.g. \cite{D} and \cite{G}, treat the balls or discs,   
we treat, in this paper, the case of pseudoconvex domains, especially, generalized complex 
pseudoellipsoids. 
As far as the author's knowledge, this is the first 
non-extendability theorem for pseudoconvex domains.  
The difficulty to study generalized complex pseudoellipsoids comes from the fact that 
its defining function does not define a norm, and therefore, 
we can not use the triangle inequality. 

For integers $m_1, \dots, m_{k}$, 
we let $M_0=0$ and $M_k=m_1+\dots+m_k$. 
We use notation $Z=(z_{(1)}, \dots, z_{(s+1)}) 
\in \mathbb{C}^{m_1} \times \dots \times \mathbb{C}^{m_{s+1}}$
and $z_{(k)}=(z^{M_{k-1}+1}, \dots, z^{M_{k}}) \in \mathbb{C}^{m_{k}}$. 
Let $E(m_1, \dots, m_{s+1}; \alpha_1, \dots, \alpha_s)$ be a domain with real analytic boundary 
defined by 
\begin{align}
&E(m_1, \dots, m_{s+1}; \alpha_1, \dots, \alpha_s)  \nn \\
&=
\{Z \in \mathbb{C}^{M_{s+1}} : 
||z_{(1)}||^{2\alpha_1}+\dots+||z_{(s)}||^{2\alpha_s}+||z_{(s+1)}||^2-1<0,  
\alpha_j \in \mathbb{N}\}. \nn
\end{align}
This domain is called a generalized complex pseudoellipsoid. 
For simplicity, we write 
$E(m_1, \dots, m_{s+1}; \alpha_1, \dots, \alpha_s)=E((m);(\alpha))$ and 
$||z_{(1)}||^{2\alpha_1}+\dots+||z_{(s)}||^{2\alpha_s}+||z_{(s+1)}||^2=|||Z|||^{2\alpha}$. 
Let $E((n); (\beta))$ be another generalized complex pseudoellipsoid with the same dimension as 
$E((m);(\alpha))$. 
If there exists a proper holomorphic mapping $F : E((m);(\alpha)) \to E((n);(\beta))$, 
then it extends holomorphically to $\overline{E((m);(\alpha))}$. 
The aim of the present paper is to show the non-extension phenomenon 
when the dimension differs. 
In the higher codimensional case, 
there exists a proper holomorphic mapping which can not be extended 
continuously to the closure of the source domain. 
The main theorem is the following. 

\begin{main theorem}\label{mth}
For a generalized complex pseudoellipsoid $E((m); (\alpha))$, 
there exists a generalized complex pseudoellipsoid 
$E((n); (\beta))$ and a proper holomorphic mapping 
$F : E((m); (\alpha)) \to E((n); (\beta))$ such that $F$ does not extend continuously 
to $\overline{E((m); (\alpha))}$. 
\end{main theorem}

The purpose of the existence of non-extendability theorem is the following.  
Let $D_1 \subset \mathbb{C}^{m} , D_2 \subset \mathbb{C}^n, m<n,$ and $F$ 
be the same as in the begining of this section. 
We want to know the conditions for $D_1, D_2$ and $F$ to guarantee 
the holomorphic extendability across the boundary.   
X.~Huang \cite{H} proved the following theorem. 
\begin{theorem}
Let $f:B^m \to B^n$ be a proper holomorphic mapping 
which extends as a $C^2$ mapping across the boundary. 
Assume that $m \leq n \leq 2m-2$. 
Then $f$ is of the form $f(z)=(z, 0, \dots, 0)$ up to 
the automorphisms of balls,  
where $0$ is added $n-m$ components.   
\end{theorem}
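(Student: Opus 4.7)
The plan is to localize $f$ at a boundary point via a Cayley transform, bring the local map into Chern--Moser normal form on the Heisenberg group, and then apply a Hermitian rank/gap lemma whose hypothesis becomes sharp precisely at $n-m<m-1$, i.e.\ $n\le 2m-2$.

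\textbf{Boundary localization and Chern--Moser setup.} Since $f$ extends as a $C^2$ map across $\partial B^m$, its boundary restriction is a $C^2$ CR embedding $S^{2m-1}\hookrightarrow S^{2n-1}$. Choose a boundary point $p$ and let $q=f(p)$; after composing with automorphisms of both balls, assume $p$ and $q$ are the distinguished points sent to the origin by the Cayley transforms $B^k\to\Sigma_k$, where $\Sigma_k=\{(z,w)\in\mathbb{C}^{k-1}\times\mathbb{C}:\mathrm{Im}\,w>|z|^2\}$. In these coordinates $f$ corresponds to a local holomorphic map $F=(\widetilde F,\phi,G):\Sigma_m\to\Sigma_n$, smooth up to the boundary $H_m=\partial\Sigma_m$, with $\widetilde F$ taking values in $\mathbb{C}^{m-1}$, $\phi$ in $\mathbb{C}^{n-m}$, and $G$ in $\mathbb{C}$. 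Using the isotropy subgroup of $H_m$ at $0$ (together with an automorphism of $\Sigma_n$), I normalize $F$ in the weighted variables ($\operatorname{wt}z=1$, $\operatorname{wt}w=2$) so that
\[
\widetilde F=z+O_{\operatorname{wt}}(3),\qquad \phi=\phi^{(2)}(z)+O_{\operatorname{wt}}(3),\qquad G=w+O_{\operatorname{wt}}(4).
\]

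\textbf{Boundary identity and the gap lemma.} The identity $\operatorname{Im} G=|\widetilde F|^2+|\phi|^2$ on $H_m=\{\operatorname{Im}w=|z|^2\}$ expands into a sequence of polynomial relations in $(z,\bar z)$ indexed by weighted degree. Matching the weight-$4$ part yields, schematically,
\[
|\phi^{(2)}(z)|^2=\operatorname{Re}\langle a(z),z\rangle\cdot|z|^2\quad\text{on }\mathbb{C}^{m-1},
\]
where $a$ is a $\mathbb{C}^{m-1}$-valued linear vector encoding the leading correction in $\widetilde F$. A sharp Hermitian rank lemma (a version of the Cartan--D'Angelo rigidity for Hermitian polynomial identities on spheres) asserts that, whenever the target dimension of $\phi^{(2)}$ satisfies $n-m<m-1$, the only solution is $\phi^{(2)}\equiv 0$ together with $a\equiv 0$. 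Feeding this back, the next weighted jet of $F$ satisfies the same shape of identity at the next weight, and one propagates by induction. Unique continuation for CR maps between real analytic hypersurfaces (via the Segre family reflection principle applied to the spheres, which applies once $F$ is known to be smooth to the boundary) then upgrades vanishing of all jets to $\phi\equiv 0$, $\widetilde F(z,w)=z$, $G(z,w)=w$.

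\textbf{Conclusion and main obstacle.} Translating back through the Cayley transforms and the initial automorphism choices gives $f(z)=(z,0,\dots,0)$ modulo $\operatorname{Aut}(B^m)\times\operatorname{Aut}(B^n)$. The principal obstacle is the Hermitian rank/gap lemma: producing a sharp estimate that forces vanishing precisely when $n-m<m-1$ is a delicate piece of Hermitian linear algebra tied to the Heisenberg geometry, and it is exactly this threshold that distinguishes Huang's rigidity range from the first codimension ($n=2m-1$) where genuinely nonlinear proper maps such as the Whitney map appear. A secondary technical point is that the hypothesis provides only $C^2$ boundary regularity, so the Chern--Moser normalization and weighted matching must be carried out at finite order and then bootstrapped using the smoothing effect for CR maps into essentially finite targets before the inductive propagation can be closed.
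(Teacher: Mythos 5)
This theorem is not proved in the paper at all: it is Huang's linearity theorem, quoted verbatim from the reference [H] as motivation for the main construction, so there is no internal proof to compare your argument against. Judged on its own, your sketch does follow the broad strategy of Huang's published proof (Cayley transform to the Siegel domain, weighted normalization of the map at a boundary point, matching the weight-four part of the identity $\operatorname{Im}G=|\widetilde F|^2+|\phi|^2$ on the Heisenberg hypersurface, and invoking a rank statement that is sharp exactly when $n-m<m-1$), but the two steps that constitute the actual mathematical content are asserted rather than proved. The ``sharp Hermitian rank lemma'' you appeal to is precisely Huang's Lemma (if $\sum_{j}a_j(z)\bar z_j=|z|^2\psi(z,\bar z)$ with fewer than $m-1$ holomorphic functions $a_j$, then all $a_j$ vanish); this is the heart of the theorem and cannot be treated as a black box in a proof proposal, since the whole dimension restriction $n\le 2m-2$ lives there.

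The second gap is the propagation step. You claim that once $\phi^{(2)}\equiv 0$ one can ``match the next weighted jet'' inductively and then use unique continuation to conclude $\phi\equiv 0$, $\widetilde F=z$, $G=w$. With only the hypothesis $f\in C^2(\overline{B^m})$ the weighted expansion of the boundary identity is available only through weight four, so there are no higher-order jets to match, and an infinite-order agreement at a single boundary point is not something you can legitimately produce; your own closing remark concedes this but defers it to an unspecified ``bootstrap,'' which is not automatic in positive codimension. Huang's actual argument avoids this entirely: the weight-four identity is used at \emph{every} boundary point to show that the geometric rank of the map vanishes identically, and a separate, substantial proposition shows that rank-zero maps are linear fractional; nothing like an induction over all weights at one point occurs. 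As it stands, your proposal is a correct high-level outline of the known proof with its two essential ingredients (the rank lemma and rank-zero-implies-linearity) missing, so it does not yet constitute a proof.
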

This theorem asserts that $C^2$ extendability to the closure of 
the ball implies the holomorphic extendability to $\mathbb{C}^m$. 
Therefore, it is important to know whether the mapping under 
consideration is $C^2$-extendable or not. 
In this view point, A.~Dor \cite{D} proved that 
there exists a proper holomorphic mapping $f:B^m \to B^{m+1}$ 
such that $f \in C^0(\overline{B^m})$ but $f \not\in C^2(\overline{B^m})$. 
On the other hand, originally, 
we do not know the conditions on the proper holomorphic mapping 
$f:B^m \to B^n, m+1 \leq n \leq 2m-2$,  
which guarantee the $C^0$ or the $C^2$ extendability. 
Therefore, first of all, it is important to know whether 
the mappings have the $C^0$ extension and 
if do not, then, to know the conditions 
on the $C^0$-extendability. 
Our main theorem is one of the answer to this question in the case of 
generalized pseudoellipsoids. 

The organization of this article is the following. 
In the section~2, we define a peaking function on the closure 
of the generalized pseudoellipsoid  
and obtain its estimates. 
In the section~3, we construct a entire mapping depending on 
a holomorphic mapping which satisfies certain estimates.  
Next, we construct a sequences of entire mappings 
which lead to the desired proper holomorphic mapping. 
Finally, we construct a desired proper holomorphic mapping depending on 
the harmonic function on the disc which is not continuous 
on the boundary. 
 
\section{some estimates}
In what follows, we fix $m_1, \dots, m_{s+1} \in \mathbb{N}$ 
and $\alpha_1, \dots, \alpha_{s} \in \mathbb{N}_{\geq 2}$.   
For $a \in \mathbb{C}^n$ and $r>0$, we denote by $B(a, r)$ the open ball of radius $r$ and 
centered at $a$. 
Take $W_i \in \pa E((m); (\alpha)), 1 \leq i \leq I$, where 
the number I is chosen sufficiently large so that, 
for fixed $\lambda$ and sufficiently small fixed $r$,  
$\pa E((m); (\alpha)) \subset \bigcup B(W_i, \lambda r)$ holds. 
$\lambda$ and $r$ will be determined later. 
We assume that $I$ is sufficiently large and divide it into $I=n_1+\dots+n_{t+1}$. 
Integers $n_j \geq 2$ are specified bellow depending on $s$. 
As above, we use the notation $N_0=0$ and $N_k=n_1+\dots+n_k$. 
We make copies of $W_i$ such as 
$W_{i+n_{k}}=W_i$ 
for $2N_{k-1}+1 \leq i \leq 2N_{k-1}+n_{k}$. 

Let $E(n_1, \dots, n_{t+1}; \beta_1, \dots, \beta_t)$, for shortly $E((n); (\beta))$,  
be a domain with real analytic boundary 
defined by 
\begin{align}
&E((n);(\beta)) \nn \\
& \quad 
=\{Z=(z_{(1)}, \dots, z_{(t+1)}) 
\in \mathbb{C}^{n_1} \times \dots \times \mathbb{C}^{n_{t+1}+p} : 
|||Z|||^{2\beta}-1<0,  
\beta_j \in \mathbb{N}\}.  \nn
\end{align}
We take $n_1, \dots, n_{t+1}, p$ and $\beta_1, \dots, \beta_{t}$ 
so as to exist a proper holomorphic mapping between 
$E((m); (\alpha))$ and $E((n); (\beta))$. 

For a real analytic function
$\rho(Z)$ and a vector  
$v \in \mathbb{C}^{M_{s+1}}$, 
we define its gradient and real Hessian as 
\begin{equation}
N(Z)=\left(\dfrac{\pa \rho}{\; \pa \bar{z}^1 \;}, \dots, 
\dfrac{\pa \rho}{\; \pa \bar{z}^{M_{s+1}} \; } \right),  \nn
\end{equation}
and 
\begin{equation}
Q_Z(v)=\mathrm{Re}\left(\sum^{M_{s+1}}_{\gamma, \delta=1}
\dfrac{\pa^2\rho}{\; \pa z^{\gamma}z^{\delta} \;}(Z, \bar{Z})v^{\gamma}v^{\delta}\right)
+\sum^{M_{s+1}}_{\gamma, \delta=1}\dfrac{\pa^2 \rho}
{\; \pa z^{\gamma} \pa \bar{z}^{\delta} \;}(Z, \bar{Z})v^{\gamma} \bar{v}^{\delta}. \nn
\end{equation}

If $\rho(Z)=||z_{(1)}||^{2\alpha_1}+\dots+||z_{(s)}||^{2\alpha_s}+||z_{(s+1)}||^2-1$, 
then we have 
\begin{align}
&\langle W_{i}-Z, N(W_{i}) \rangle \nn \\
&=\alpha_1||w_{i(1)}||^{2\alpha_1}+\dots+
\alpha_s||w_{i(s)}||^{2\alpha_s}+||w_{i(s+1)}||^2   \nn \\
&\qquad -\alpha_1||w_{i(1)}||^{2(\alpha_1-1)}\langle z_{(1)}, w_{i(1)} \rangle-\dots-
\alpha_s||w_{i(s)}||^{2(\alpha_s-1)}\langle z_{(s)}, w_{i(s)} \rangle   \nn \\
&\qquad -\langle z_{(s+1)}, w_{i(s+1)} \rangle.  \nn 
\end{align}

Put $h(t)=\rho((1-t)W_i+tZ), \; t \in \mathbb{R}$ for $W_i, Z \in \pa E((m);(\alpha))$. 
Then $h(0)=h(1)=0$. 
By the chain rule, 
\begin{align}
h'(0)&=2\mathrm{Re}\langle Z-W_i, N(W_i) \rangle,  \nn \\
h''(t)&=2Q_{(1-t)W_i+tZ}(W_i-Z).   \nn
\end{align}
Insert these data into the Taylor formula 
$h(t)=h(0)+h'(0)t+1/2h''(t_0)t^2$ 
and put $t=1$, 
then we have 
\begin{equation}
2\mathrm{Re}\langle W_i-Z, N(W_i) \rangle=Q_{(1-t_0)W_i+t_0Z}(W_i-Z)   \nn
\end{equation}
for some $t_0 \in (0, 1)$. 

In order to estimate $\text{Re}\langle W_i-Z, N(W_i)/||N(W_i)||\rangle$ in terms of $||W_i-Z||$, 
we need to find $Z$ such that the quadratic form $Q_{(1-t_0)W_i+t_0Z}(W_i-Z)$ degenerates,  
namely, $Z$ such that $Q_{(1-t_0)W_i+t_0Z}(W_i-Z)=0$ for $W_i-Z \not=0$. 
Assume $W_i\not=Z$. 
Since it is calculated as 
\begin{align}
&Q_{(1-t_0)W_i+t_0Z}(W_i-Z)  \label{Q}\\
&=\sum^{s}_{k=1}\Biggl[(\alpha_k-1)
\Bigl\{\text{Re}\sum^{M_k}_{\gamma=M_{k-1}+1}
((1-t_0)\bar{w}^{\gamma}_{i}+t_0\bar{z}^{\gamma})(w^{\gamma}_{i}-z^{\gamma}) \Bigr\}^2
\nn \\
&\qquad \qquad \qquad \qquad \qquad 
+||(1-t_0)w_{i(k)}+t_0z_{k}||^2\sum^{M_k}_{\gamma=M_{k-1}+1}|w^{\gamma}_{i}-z^{\gamma}|^2 \Biggr]   \nn \\
&\qquad \qquad \qquad \qquad 
\times \alpha_k||(1-t_0)w_{i(k)}+t_0z_{(k)}||^{2(\alpha_k-2)} \nn\\
&\qquad 
+\sum^{M_{s+1}}_{\gamma=M_{s}+1}|w^{\gamma}_{i}-z^{\gamma}|^2=0,   \nn
\end{align}
we obtain $w_{i(s+1)}=z_{(s+1)}$ and there exists at least one $k$ such that 
$(1-t_0)w_{i(k)}+t_0z_{(k)}=0$. 
Assume that, by reordering the variables, 
$(1-t_0)w_{i(1)}+t_0z_{(1)}\not=0, \dots, (1-t_0)w_{i(\ell)}+t_0z_{(\ell)}\not=0$ and 
$(1-t_0)w_{i(\ell+1)}+t_0z_{(\ell+1)}=0, \dots, (1-t_0)w_{i(s)}+t_0z_{(s)}=0$. 
Then it follows from (\ref{Q}) that $w_{i(1)}=z_{(1)}, \dots, w_{i(\ell)}=z_{(\ell)}$. 
Summary, the point $Z$ where $Q_{(1-t_0)W_i+t_0Z}(W_i-Z)$ degenerates is written as 
\begin{equation}
Z=(w_{i(1)}, \dots, w_{i(\ell)}, \dfrac{\; t_0-1 \;}{t_0}w_{i(\ell+1)}, \dots, 
\dfrac{\; t_0-1 \;}{t_0}w_{i(s)}, w_{i(s+1)}). \label{zdeg}
\end{equation}
Since the points $Z$ and $W_i$ are in the boundary of $E((m);(\alpha))$, the identity 
\begin{align}
&||w_{i(1)}||^{2\alpha_1}+\dots+||w_{i(\ell)}||^{2\alpha_{\ell}}+
||\dfrac{\; t_0-1 \;}{t_0}w_{i(\ell+1)}||^{2\alpha_{\ell+1}}  
+\dots   \label{wt} \\
&+||\dfrac{\; t_0-1 \;}{t_0}w_{i(s)}||^{2\alpha_{s}}   
+(1-||w_{i(1)}||^{2\alpha_1}-\dots-||w_{i(s)}||^{2\alpha_{s}})-1=0 \nn 
\end{align}
holds. 
Therefore 
if $Z$ is of the form (\ref{zdeg}) for fixed $W_i$ and for some $t_0$ satisfying (\ref{wt}), 
then $Q_{(1-t_0)W_i+t_0Z}(W_i-Z)$ degenerates. 
Now we estimate $\text{Re}\langle W_i-Z, N(W_i)/||N(W_i)||\rangle$ in terms of $||W_i-Z||$. 
\begin{lemma}\label{Reestimate}
There exist positive constants $A_1, A_2, B_1,B_2$ such that the following estimates hold. 

$\mathrm{(i)}$ If $Z \in \pa E((m);(\alpha))$ is not of the form $\mathit(\ref{zdeg})$, then 
\begin{equation}
A_1||W_i-Z||^2\leq \mathrm{Re}\langle W_i-Z, N(W_i)/||N(W_i)||\rangle
\leq A_2||W_i-Z||^2  \nn
\end{equation}
holds. 

$\mathrm{(ii)}$  
If $Z \in \pa E((m);(\alpha))$ is of the form $(\ref{zdeg})$, then  
\begin{equation}
B_1||W_i-Z||^{2\alpha_{\mathrm{max}}}\leq \mathrm{Re}\langle W_i-Z, N(W_i)/||N(W_i)||\rangle 
\leq B_2||W_i-Z||^{2\alpha_{\mathrm{min}}}   \nn
\end{equation}
holds. 
Here we denote by $\alpha_{\mathrm{max}}=\mathrm{max}\{\alpha_1, \dots, \alpha_s\}$ and 
$\alpha_{\mathrm{min}}=\mathrm{min}\{\alpha_1, \dots, \alpha_s\}$. 
\end{lemma}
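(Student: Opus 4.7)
The overall strategy is to reduce both estimates to bounds on the quadratic form $Q$ via the Taylor-theorem identity
\[
2\mathrm{Re}\langle W_i - Z, N(W_i)\rangle = Q_{(1-t_0)W_i + t_0 Z}(W_i - Z)
\]
derived in the excerpt, with $t_0 \in (0,1)$ the midpoint parameter. Since $\pa E((m);(\alpha))$ is compact and $N$ is nowhere vanishing on it, $\|N(W_i)\|$ is bounded away from $0$ and $\infty$, so normalization by it only rescales the constants $A_1,A_2,B_1,B_2$; I therefore focus on comparing $Q_{(1-t_0)W_i+t_0Z}(W_i-Z)$, or equivalently $\mathrm{Re}\langle W_i-Z,N(W_i)\rangle$ itself, to the appropriate power of $\|W_i-Z\|$.

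For case (i), the hypothesis says exactly that $Z$ fails (\ref{zdeg}) for every choice of $t_0 \in (0,1)$ and $\ell$; in particular, at the midpoint $t_0$ produced by the Taylor theorem, the non-negative pieces in (\ref{Q}) cannot all simultaneously vanish, so $Q_{(1-t_0)W_i+t_0Z}(W_i-Z) > 0$. Discarding the non-negative square terms in (\ref{Q}) leaves
\[
Q \geq \sum_{k=1}^{s} \alpha_k \,\|(1-t_0)w_{i(k)} + t_0 z_{(k)}\|^{2(\alpha_k - 1)}\,\|w_{i(k)} - z_{(k)}\|^2 + \|w_{i(s+1)} - z_{(s+1)}\|^2,
\]
and the non-degeneration of $Z$ keeps each prefactor on the contributing blocks strictly positive, which yields the lower bound $A_1\|W_i-Z\|^2 \leq Q$. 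The matching upper bound $Q \leq A_2\|W_i-Z\|^2$ follows by expanding (\ref{Q}) and bounding all coefficients uniformly using the compactness of $\pa E((m);(\alpha))$.

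For case (ii), I compute directly. Substituting $z_{(k)} = \frac{t_0-1}{t_0}w_{i(k)}$ for $k \in \{\ell+1,\dots,s\}$ (and $z_{(k)} = w_{i(k)}$ otherwise) into the formula for $\langle W_i - Z, N(W_i)\rangle$ displayed in the excerpt kills the contributions from blocks $1,\dots,\ell$ and $s+1$, leaving
\[
\langle W_i - Z, N(W_i)\rangle = \frac{1}{t_0}\sum_{k=\ell+1}^{s}\alpha_k\|w_{i(k)}\|^{2\alpha_k},
\]
which is real and positive, while $\|W_i - Z\|^2 = t_0^{-2}\sum_{k=\ell+1}^{s}\|w_{i(k)}\|^2$. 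The boundary equation (\ref{wt}) ties $t_0$ to the $\|w_{i(k)}\|$ and the $\alpha_k$. Elementary inequalities between sums of $\|w_{i(k)}\|^2$ and sums of $\|w_{i(k)}\|^{2\alpha_k}$, together with the resulting powers of $t_0$, then produce the asserted upper bound by $B_2\|W_i-Z\|^{2\alpha_{\min}}$ and lower bound by $B_1\|W_i-Z\|^{2\alpha_{\max}}$.

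The main obstacle will be securing \emph{uniform} constants. In case (ii), the constraint (\ref{wt}) is nonlinear in $t_0$ when the $\alpha_k$'s differ, and confining $t_0$ to a compact subinterval of $(0,1)$ uniformly over admissible $W_i$ is the delicate point; it rests on $\alpha_k \geq 2$ and the positivity of each $\|w_{i(k)}\|^{2\alpha_k}$ entering (\ref{wt}). In case (i), the prefactor $\|(1-t_0)w_{i(k)} + t_0 z_{(k)}\|^{2(\alpha_k-1)}$ can shrink as $Z$ approaches the (closed) degenerate locus in $\pa E$; I expect to handle this by a continuity-compactness argument exploiting that the ratio $\mathrm{Re}\langle W_i-Z,N(W_i)\rangle/\|W_i-Z\|^2$ extends continuously across the complement of that locus.
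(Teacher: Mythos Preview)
Your approach matches the paper's: for (i) it likewise invokes the non-degeneracy of $Q$ (the paper simply cites Forstneri\v{c}'s Lemma~5.1), and for (ii) it carries out the same direct substitution and compares $\sum_{k>\ell}\alpha_k\|w_{i(k)}\|^{2\alpha_k}$ against $\|W_i-Z\|^{2\alpha_{\max}}$ and $\|W_i-Z\|^{2\alpha_{\min}}$ via the elementary power-sum inequalities you outline. The obstacle you flag about confining $t_0$ dissolves: subtracting the boundary equation for $W_i$ from (\ref{wt}) gives $\sum_{k>\ell}\bigl[((1-t_0)/t_0)^{2\alpha_k}-1\bigr]\|w_{i(k)}\|^{2\alpha_k}=0$, and since all bracketed factors share the same sign this forces $t_0=\tfrac12$, so no separate uniformity argument is needed (the paper simply leaves $t_0$ in its explicit formulas for $B_1,B_2$, treating it as this fixed value).
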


\begin{proof}
Proof (i). This is the case that $Q_{(1-t_0)W_i+t_0Z}(W_i-Z)$ does not degenerate, 
which is the same situation as in 5.1~Lemma in \cite{F}. 

Proof (ii). 
In this case, 
it suffices to show that 
\begin{equation}
\dfrac{\; \text{Re}\langle W_i-Z, N(W_i)/||N(W_i)|| \rangle \;}{||W_i-Z||^{2\alpha_{\text{max}}}} 
\geq B_1  \nn
\end{equation}
and 
\begin{equation}
\dfrac{\; \text{Re}\langle W_i-Z, N(W_i)/||N(W_i)|| \rangle \;}{||W_i-Z||^{2\alpha_{\text{min}}}} 
\leq B_2.  \nn
\end{equation} 
$\text{Re}\langle W_i-Z, N(W_i)/||N(W_i)|| \rangle$ is computed as follows. 
\begin{align}
&\mathrm{Re}\langle W_i-Z, N(W_i)/||N(W_i)|| \rangle  \nn \\
&=\dfrac{\; \alpha_{\ell+1}||w_{i(\ell+1)}||^{2\alpha_{\ell+1}}+\dots+\alpha_{s}||w_{i(s)}||^{2\alpha_{s}}  \;}
{\; t_0\sqrt{\alpha_1^2||w_{i(1)}||^{4\alpha_1-2}+\dots+\alpha_s^2||w_{i(s)}||^{4\alpha_s-2}+1
-||w_{i(1)}||^{2\alpha_1}-\dots-||w_{i(s)}||^{2\alpha_{s}}} \;}.     \nn 
 \end{align}  
The inequalities 
\begin{align}
&\dfrac{\; \alpha_{\ell+1}||w_{i(\ell+1)}||^{2\alpha_{\ell+1}}+\dots+\alpha_{s}||w_{i(s)}||^{2\alpha_{s}} \;}
{\; ||W_i-Z||^{2\alpha_{\mathrm{max}}} \;}   \nn \\
&>
\dfrac{\; t_0^{2\alpha_{\mathrm{max}}}\alpha_{\mathrm{min}}
\left\{||w_{i(\ell+1)}||^{2\alpha_{\mathrm{max}}}+\dots
+||w_{i(s)}||^{2\alpha_{\mathrm{max}}}\right\} \;}
{\; (s-\ell)^{\alpha_{\mathrm{max}}-1}
\left\{||w_{i(\ell+1)}||^{2\alpha_{\mathrm{max}}}+\dots
+||w_{i(s)}||^{2\alpha_{\mathrm{max}}}\right\} \;}    \nn  \\
&=
\dfrac{\; t_0^{2\alpha_{\mathrm{max}}}\alpha_{\mathrm{min}} \;}
{\; (s-\ell)^{\alpha_{\mathrm{max}}-1} \;}  \nn
\end{align}
and 
\begin{align}
&\alpha_1^2||w_{i(1)}||^{4\alpha_1-2}+\dots+\alpha_s^2||w_{i(s)}||^{4\alpha_s-2}
+1-||w_{i(1)}||^{2\alpha_1}-\dots-||w_{i(s)}||^{2\alpha_{s}}  \nn   \\
&<\alpha_1^2+\dots+\alpha_s^2+1  \nn 
\end{align}
imply that $\text{Re}\langle Z-W_i, N(W_i)/||N(W_i)||\rangle /||W_i-Z||^{2\alpha_{\text{max}}}$ 
is bigger than 
\begin{equation}
B_1=\dfrac{\; t^{2\alpha_{\mathrm{max}}-1}_{0}\alpha_{\text{min}} \;}{\; (s-\ell)^{\alpha_{\mathrm{max}}-1} 
\sqrt{\alpha_1^2+\dots+\alpha_s^2+1} \;}.   \nn
\end{equation}
The inequalities 
\begin{align}
&\dfrac{\; \alpha_{\ell+1}||w_{i(\ell+1)}||^{2\alpha_{\ell+1}}+\dots+\alpha_{s}||w_{i(s)}||^{2\alpha_{s}} \;}
{\; ||W_i-Z||^{2\alpha_{\mathrm{min}}} \;}    \nn \\
&<
\dfrac{\; t^{2\alpha_{\mathrm{min}}}_{0}\alpha_{\mathrm{max}}\left\{||w_{i(\ell+1)}||^{2\alpha_{\mathrm{min}}}
+\dots
+||w_{i(s)}||^{2\alpha_{\mathrm{min}}}\right\} \;}
{\; ||w_{i(\ell+1)}||^{2\alpha_{\mathrm{min}}}+\dots
+||w_{i(s)}||^{2\alpha_{\mathrm{min}}} \;}    \nn \\
&=t^{2\alpha_{\mathrm{min}}}_{0}\alpha_{\mathrm{max}}.     \nn
\end{align}
and 
\begin{align}
&\alpha_1^2||w_{i(1)}||^{4\alpha_1-2}+\dots+\alpha_s^2||w_{i(s)}||^{4\alpha_s-2}   \nn \\
& \qquad \qquad \qquad \qquad  
+1-||w_{i(1)}||^{2\alpha_1}-\dots-||w_{i(s)}||^{2\alpha_{s}}>\dfrac{1}{\; 2 \;}   \nn 
\end{align}
if $0 \leq ||w_{i(1)}||^{2\alpha_1}+\dots+||w_{i(s)}||^{2\alpha_{s}} \leq 1$ 
show that we can take 
$B_2=\sqrt{2}\alpha_{\text{max}}t^{2\alpha_{\mathrm{min}}-1}_{0}$. 
\end{proof}
For $\nu>0$, the function 
\begin{equation}
\phi_{i}(Z)=e^{-\nu \langle W_i-Z, N(W_i)/||N(W_i)|| \rangle}   \nn
\end{equation}
is a peaking function on $\overline{E((m);(\alpha))}$ and 
it follows from Lemma~\ref{Reestimate} that its modulus 
\begin{equation}
|\phi_{i}(Z)|=e^{-\nu\mathrm{Re} \langle W_i-Z, N(W_i)/||N(W_i)|| \rangle}   \nn
\end{equation}
is estimated as 
\begin{equation}
e^{-\nu A||W_i-Z||^{2p}} \leq |\phi_{i}(Z)| \leq e^{-\nu B||W_i-Z||^{2q}},   \label{phiest}
\end{equation}
for some $p, q, A$ and $B$. 
Note that $\phi_{i}=\phi_{i+n_{k}}$ for each $i=2N_{k-1}+1, 
\dots, 2N_{k-1}+n_{k}, k=1. \dots, t+1$. 
Let $g^i$ be an entire function of the form 
\begin{equation}
g^{i}(Z)=\gamma_{i}\phi_{i}(Z), \quad |\gamma_{i}| \leq 1.  \label{gi}
\end{equation}
Let $A_1, A_2, B_1, B_2$ be as in Lemma~{\ref{Reestimate}}. 
We can choose $r$ such that $0 < r <1$ and 
\begin{equation}
\left(\dfrac{\; 4\beta^{2}_{t}B_2r^{2(\alpha_{\mathrm{min}}-\alpha_{\mathrm{max}})} \;}{B_1}
\right)^{1/(2\alpha_{\mathrm{max}})}
\geq
\left(\dfrac{\; 4\beta^{2}_{t}A_2 \;}{A_1}\right)^{1/2}.  \label{aabb}
\end{equation}
Define $\lambda$ as 
\begin{equation}
\lambda=\left(\dfrac{\; 4\beta^{2}_{t}B_2r^{2(\alpha_{\mathrm{min}}-\alpha_{\mathrm{max}})} \;}{B_1}
\right)^{1/(2\alpha_{\mathrm{max}})}.   \nn  
\end{equation}

\begin{lemma}\label{gestimate}
For each sufficiently small $\eta>0$, there exist $\nu>0$ and $r \; (0<r<1)$ such that 
$r$ satisfies $\mathrm{(\ref{aabb})}$ 
and the following statement holds, 

If $Z \in \pa E((m);(\alpha))$ satisfies $||W_i-Z||>\lambda r$, 
then  
$|g^i(Z)|<\eta$. 
\end{lemma}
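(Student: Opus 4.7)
The plan is to combine the exponential upper bounds on $|\phi_i(Z)|$ coming from Lemma~\ref{Reestimate} (equivalently, from (\ref{phiest})) with the hypothesis $\|W_i-Z\|>\lambda r$, and then enlarge $\nu$ until both resulting bounds fall below $\eta$. Since $|g^i(Z)|\leq|\phi_i(Z)|=e^{-\nu\,\mathrm{Re}\langle W_i-Z,\,N(W_i)/\|N(W_i)\|\rangle}$, I split into the two cases of Lemma~\ref{Reestimate}. In the non-degenerate case~(i), the hypothesis yields
\begin{equation*}
|g^i(Z)|\leq e^{-\nu A_1\|W_i-Z\|^2}<e^{-\nu A_1\lambda^2 r^2},
\end{equation*}
while in the degenerate case~(ii) the same reasoning with exponent $2\alpha_{\mathrm{max}}$ gives
\begin{equation*}
|g^i(Z)|\leq e^{-\nu B_1\|W_i-Z\|^{2\alpha_{\mathrm{max}}}}<e^{-\nu B_1\lambda^{2\alpha_{\mathrm{max}}}r^{2\alpha_{\mathrm{max}}}}.
\end{equation*}

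The useful algebraic feature is that the very definition of $\lambda$ gives $B_1\lambda^{2\alpha_{\mathrm{max}}}r^{2\alpha_{\mathrm{max}}}=4\beta_t^2 B_2\, r^{2\alpha_{\mathrm{min}}}$, so the degenerate bound simplifies to $e^{-4\nu\beta_t^2 B_2 r^{2\alpha_{\mathrm{min}}}}$, a quantity that depends only on already fixed parameters once $r$ has been chosen. Since the left-hand side of (\ref{aabb}) behaves like a non-positive power of $r$ as $r\to 0^+$, any sufficiently small $r\in(0,1)$ satisfies (\ref{aabb}), and I would fix such an $r$ at the outset.

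With $r$ (and hence $\lambda$) fixed, I would then take
\begin{equation*}
\nu>\frac{\ln(1/\eta)}{\min\bigl\{A_1\lambda^2 r^2,\;4\beta_t^2 B_2\, r^{2\alpha_{\mathrm{min}}}\bigr\}}.
\end{equation*}
This choice makes both exponentials strictly less than $\eta$, so $|g^i(Z)|<\eta$ uniformly in $i$ and regardless of whether $Z$ is of the form (\ref{zdeg}) or not. The argument is essentially mechanical; the only point worth checking is that the constants $A_j,B_j$ and the intermediate parameter $t_0$ produced in Lemma~\ref{Reestimate} are uniform over $Z\in\pa E((m);(\alpha))$ and over the finitely many $W_i$, so that the final $\nu$ can indeed be chosen independently of $i$ and $Z$. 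The hypothesis that $\eta$ be sufficiently small is used only to keep $\ln(1/\eta)$ positive; no genuine obstacle appears in the estimate itself.
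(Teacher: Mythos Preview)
Your proposal is correct and follows essentially the same approach as the paper: both split into the two cases of Lemma~\ref{Reestimate}, bound $|g^i(Z)|$ by the appropriate exponential using $\|W_i-Z\|>\lambda r$, and then choose $\nu$ large. The only cosmetic differences are that in case~(i) the paper further converts $A_1\lambda^2 r^2$ to $4\beta_t^2 A_2 r^2$ via the lower bound in (\ref{aabb}), and it packages the final choice as a single inequality on the product $\nu r^{2\alpha_{\mathrm{min}}}$ rather than fixing $r$ first; your observation that $B_1\lambda^{2\alpha_{\mathrm{max}}}r^{2\alpha_{\mathrm{max}}}=4\beta_t^2 B_2 r^{2\alpha_{\mathrm{min}}}$ is exactly the computation the paper performs in case~(II).
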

\begin{proof}
First, we take sufficiently small $\eta>0$. 
We divide the proof into two parts according to the form of $Z$. 

(I) $Z$ is not of the form $\mathit(\ref{zdeg})$. 

By Lemma~\ref{Reestimate} (i) and 
$\lambda \geq \left(4\beta^{2}_{t}A_2/A_1\right)^{1/2}$,  
the following inequalities hold, 
\begin{equation}
-\nu\text{Re} \langle W_i-Z, N(W_i)/||N(W_i)|| \rangle \leq -\nu A_1||W_i-Z||^2 
<-4\nu A_2r^2\beta^{2}_{t}.  \nn   
\end{equation}
Therefore we have 
\begin{equation}
|g^i(Z)|<e^{-\nu\text{Re} \langle W_i-Z, N(W_i)/||N(W_i)|| \rangle}<e^{-4\nu A_2r^2\beta^{2}_{t}}.    \nn
\end{equation}
For any given $\eta$, we can choose $\nu$ and $r$  
such that 
\begin{equation}
\nu r^2>\dfrac{1}{\; 4A_2\beta^{2}_{t} \;}\log\dfrac{1}{\; \eta \;}.  \nn
\end{equation}

(II) $Z$ is of the form $\mathit(\ref{zdeg})$. 

By Lemma~\ref{Reestimate} (ii) and 
by the definition of $\lambda$, 
the same argument as in (I) implies 
\begin{equation}
|g^i(Z)|<e^{-4\nu B_2r^{2\alpha_{\mathrm{min}}}\beta^{2}_{t}}.  \nn
\end{equation}
For any given $\eta$, we can choose $\nu$ and $r$  
such that 
\begin{equation}
\nu r^{2\alpha_{\mathrm{min}}}>
\dfrac{1}{\; 4B_2\beta^{2}_{t} \;}\log\dfrac{1}{\; \eta \;}.   \nn
\end{equation}
Now if we choose $\nu$ and $r$ to satisfy 
\begin{equation}
\nu r^{2\alpha_{\mathrm{min}}}>
\dfrac{1}{\; 4\text{min}\{A_2, B_2\}\beta^{2}_{t} \;}\log\dfrac{1}{\; \eta \;},  \nn
\end{equation}
then they satisfy all conditions posed in this lemma. 
\end{proof}

In addition to Lemma~{\ref{Reestimate}}, we need a further estimate of 
$\text{Re} \langle W_i-Z,  N(W_i) / ||N(W_i)|| \rangle$. 
For $\tau \geq 1$, let $E((m); (\alpha))_{\tau}$ be a domain defined by 
\begin{align}
&E((m); (\alpha))_{\tau}   \nn \\
&=\{
Z \in \mathbb{C}^{M_{s+1}} : 
||\tau z_{(1)}||^{2\alpha_1}+\dots+||\tau z_{(s)}||^{2\alpha_s}+
||\tau z_{(s+1)}||^{2}-1<0\}. \nn
\end{align}

\begin{lemma}\label{re1-t}
We fix $T \geq 1$. 
If $Z \in \overline{E((m); (\alpha))_{T}}$ and $W \in \pa E((m); (\alpha))$, then 
$\mathrm{Re} \langle W-Z, N(W)/||N(W)|| \rangle \geq 1-1/T$ holds.  
\end{lemma}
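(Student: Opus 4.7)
My plan is to reduce the estimate to two elementary geometric facts about $\overline{E((m);(\alpha))}$: it is real-convex, and it contains the closed unit ball.

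First I will use the scaling identity $\overline{E((m);(\alpha))_T}=(1/T)\,\overline{E((m);(\alpha))}$, which is immediate from the definition: $Z\in E((m);(\alpha))_T$ iff $TZ\in E((m);(\alpha))$. Writing any $Z\in\overline{E((m);(\alpha))_T}$ as $Z=U/T$ with $U\in\overline{E((m);(\alpha))}$ and setting $\xi=N(W)/\|N(W)\|$, the quantity to estimate splits as
\[
\mathrm{Re}\langle W-Z,\xi\rangle=\mathrm{Re}\langle W,\xi\rangle-\frac{1}{T}\mathrm{Re}\langle U,\xi\rangle.
\]

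Next I will invoke convexity. Each block term $\|z_{(k)}\|^{2\alpha_k}$ is the composition of the convex nondecreasing map $t\mapsto t^{\alpha_k}$ (using $\alpha_k\ge 1$) with the convex map $z\mapsto\|z\|^2$, while $\|z_{(s+1)}\|^2$ is itself convex, so the defining function $\rho$ is real-convex and $\overline{E((m);(\alpha))}$ is convex. The first-order convexity inequality $\rho(U)\ge \rho(W)+2\mathrm{Re}\langle U-W,N(W)\rangle$ combined with $\rho(W)=0$ and $\rho(U)\le 0$ yields the supporting-hyperplane bound $\mathrm{Re}\langle U,\xi\rangle\le\mathrm{Re}\langle W,\xi\rangle$ for every $U\in\overline{E((m);(\alpha))}$. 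Substituting into the split reduces the lemma to proving $\mathrm{Re}\langle W,\xi\rangle\ge 1$, since then $\mathrm{Re}\langle W-Z,\xi\rangle\ge(1-1/T)\mathrm{Re}\langle W,\xi\rangle\ge 1-1/T$.

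Finally I will verify the unit-ball inclusion $\overline{B(0,1)}\subset\overline{E((m);(\alpha))}$: if $\|Z\|\le 1$, then each $\|z_{(k)}\|\le 1$, so $\|z_{(k)}\|^{2\alpha_k}\le\|z_{(k)}\|^2$ because $\alpha_k\ge 1$, and summing across all blocks gives $\rho(Z)\le\|Z\|^2-1\le 0$. Since $\overline{E((m);(\alpha))}$ is convex and contains the closed unit ball, its support function in the direction $\xi$ dominates that of the unit ball:
\[
\mathrm{Re}\langle W,\xi\rangle=\sup_{X\in\overline{E((m);(\alpha))}}\mathrm{Re}\langle X,\xi\rangle\ge\sup_{X\in\overline{B(0,1)}}\mathrm{Re}\langle X,\xi\rangle=1,
\]
the first equality being the supporting-hyperplane property from the previous paragraph. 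No step looks like a real obstacle; the whole argument is a short chain of scaling, convexity, and ball-containment, and the Section~2 hypothesis $\alpha_k\ge 2$ is comfortably stronger than the $\alpha_k\ge 1$ actually used.
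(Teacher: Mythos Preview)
Your proof is correct and genuinely different from the paper's. The paper argues by direct optimization: for each fixed $\tau\ge T$ it restricts to $Z\in\partial E((m);(\alpha))_\tau$, claims (with some hand-waving) that the minimum of $\mathrm{Re}\langle W-Z,N(W)/\|N(W)\|\rangle$ is attained at $Z=W/\tau$, evaluates the resulting expression explicitly, and then proves the algebraic inequality
\[
\frac{(\alpha_1-1)\|w_{(1)}\|^{2\alpha_1}+\cdots+(\alpha_s-1)\|w_{(s)}\|^{2\alpha_s}+1}
{\sqrt{\alpha_1^2\|w_{(1)}\|^{4\alpha_1-2}-\|w_{(1)}\|^{2\alpha_1}+\cdots+1}}\ge 1
\]
by induction on the number of blocks. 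Your route bypasses all of this by observing that the two needed facts are purely geometric: real convexity of $\rho$ gives the supporting-hyperplane bound $\mathrm{Re}\langle U,\xi\rangle\le\mathrm{Re}\langle W,\xi\rangle$, and the inclusion $\overline{B(0,1)}\subset\overline{E((m);(\alpha))}$ forces $\mathrm{Re}\langle W,\xi\rangle\ge 1$ via comparison of support functions. The scaling $Z=U/T$ then finishes the job in one line. Your argument is shorter, avoids the somewhat informal identification of the minimizer and the inductive algebraic verification, and makes transparent that only $\alpha_k\ge 1$ is required; the paper's approach, on the other hand, yields the exact minimizer $Z=W/T$ and an explicit formula for the minimum value, which your abstract argument does not produce (though the lemma does not need it).
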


\begin{proof}
We fix $\tau$ with $T \leq \tau$. 
First, we find the minimum value of 
$\text{Re} \langle W-Z, N(W)/||N(W)|| \rangle$ for 
$Z \in \pa E((m); (\alpha))_{\tau}$ and $W \in \pa E((m); (\alpha))$,  
next we will show that its minimum value is $1-1/T$ when $\tau$ moves.  
$\text{Re} \langle W-Z, N(W)/||N(W)|| \rangle$ takes a minimal value when 
$\text{Re} \langle z_{(1)}, w_{(1)} \rangle, \dots, 
\text{Re} \langle z_{(s+1)}, w_{(s+1)} \rangle$ 
take maximal values. 
It is when the variable $Z \in \pa E((m); (\alpha))_{\tau}$ satisfies  
$z_{(1)}=A_{(1)}w_{(1)}, \dots, z_{(s+1)}=A_{(s+1)}w_{(s+1)}$ for some positive real numbers 
$A_{(1)}, \dots, A_{(s+1)}$. 
Therefore they satisfy the equality  
\begin{align}
&\left\{\left(A_{(1)}\tau\right)^{2\alpha_1}-\left(A_{(s+1)}\tau\right)^{2}\right\}||w_{(1)}||^{2\alpha_1}
+\dots+
\left\{\left(A_{(s)}\tau\right)^{2\alpha_s}-\left(A_{(s+1)}\tau\right)^{2}
\right\}||w_{(s)}||^{2\alpha_s}  \nn  \\
&-1+\left(A_{(s+1)}\tau \right)^2=0 \nn
\end{align}
and therefore, we conclude that 
$\text{Re} \langle W-Z, N(W)/||N(W)|| \rangle$ takes minimum value when 
$W$ and $Z$ satisfy $Z=W/\tau$. 
Under this relation, $\text{Re} \langle W-Z, N(W)/||N(W)|| \rangle$ is calculated as 
 \begin{align}
&\mathrm{Re} \langle W-Z, N(W)/||N(W)|| \rangle   \nn  \\ 
&=\dfrac{\; (1-\dfrac{1}{\; \tau \;})\left\{\alpha_{1}||w_{(1)}||^{2\alpha_{1}}+\dots
+\alpha_{s}||w_{(s)}||^{2\alpha_{s}}+||w_{(s+1)}||^{2}\right\} \;}
{\; \sqrt{\alpha_1^2||w_{(1)}||^{4\alpha_1-2}+\dots+\alpha_s^2||w_{(s)}||^{4\alpha_s-2}+
||w_{(s+1)}||^2} \;}.   \nn   
\end{align} 
We can show that 
\begin{equation}
\dfrac{\; (\alpha_{1}-1)||w_{(1)}||^{2\alpha_{1}}+\dots
+(\alpha_{s}-1)||w_{(s)}||^{2\alpha_{s}}+1 \;}
{\; \sqrt{\alpha_1^2||w_{(1)}||^{4\alpha_1-2}-||w_{(1)}||^{2\alpha_1}+\dots+
\alpha_s^2||w_{(s)}||^{4\alpha_s-2}-||w_{(s)}||^{2\alpha_s}+1} \;} \geq 1  \nn
\end{equation}
if $0 \leq ||w_{(1)}||^{2\alpha_1}+\dots+||w_{(s)}||^{2\alpha_s} \leq 1$ 
by induction.  
Hence, we obtain the estimate
$\text{Re} \langle W-Z, N(W)/||N(W)|| \rangle \geq 1-1/\tau$ 
and its minimum value is $1-1/T$  when $T \leq \tau$. 
\end{proof}

\section{construction of the entire mapping}

Given functions $f^1, \dots, f^{2N_{t+1}}, h$, we write 
$f_{(2k-1)}=(f^{2N_{k-1}+1}, \dots, f^{2N_{k-1}+n_k})$, 
$f_{(2k)}=(f^{2N_{k-1}+n_k+1}, \dots, f^{2N_{k}})$ 
and $F=(f_{(1)}, f_{(2)},  \dots, f_{(2t+1)}, f_{(2t+2)}, h)$.   
We use the similar notation for $g_{(1)}, \dots, g_{(2t+2)}$ and 
$G=(g_{(1)}, g_{(2)},  \dots, g_{(2t+1)}, g_{(2t+2)}, 0)$. 
Starting from $F$, we construct $g^{1}(Z), \dots, g^{2N_{t+1}}(Z)$ so that they satisfy 
certain estimates. 
In what follows, we read $\beta_{t+1}=1$. 
\begin{lemma}\label{mainlemma}
Assume that the following four conditions are satisfied, 

$\mathrm{(a)}$ $\varepsilon>0,  a>0$, 
$\varepsilon+\sum^{t+1}_{k=1}a^{2\beta_k}<1$ and 
$T \geq 1$, 

$\mathrm{(b)}$ $F=(f_{(1)}, \dots, f_{(2t+2)}, h)$ is a holomorphic mapping on $E((m);(\alpha))$ 
and it satisfies  
$|||F|||^{2\beta} <\sum^{t+1}_{k=1}a^{2\beta_k}$,  

$\mathrm{(c)}$ The each component $f^i$ satisfies $|f^i(Z)|<1$ and it extends 
continuously to $\overline{E((m);(\alpha))}$, 

$\mathrm{(d)}$ $||h(Z)||^2$ extends continuously to $\overline{E((m);(\alpha))}$. 

Then we can construct an entire mapping $G=(g_{(1)}, \dots, g_{(2t+2)}, 0)$ 
such that its component $g^i$ is of the form $\mathrm{(\ref{gi})}$ 
and satisfies the following three statements, 

$\mathrm{(i)}$ $|||F(Z)+G(Z)|||^{2\beta}<\varepsilon+\sum^{t+1}_{k=1}a^{2\beta_k}$
for each point $Z \in \pa E((m);(\alpha))$, 

$\mathrm{(ii)}$ $|g^{2N_{k-1}+1}(Z)|+\dots+|g^{2N_{k-1}+n_k}(Z)|<\varepsilon$ 
and 
$|g^{2N_{k-1}+n_k+1}(Z)|+\dots+|g^{2N_{k-1}+2n_k}(Z)|<\varepsilon$ 
for $Z \in \overline{E((m);(\alpha))_T}$, 

$\mathrm{(iii)}$ Let $Z \in \pa E((m);(\alpha))$. 
Assume that the following three inequalities 
\begin{align}
&||f_{(2k-1)}(Z)+g_{(2k-1)}(Z)||^2<
\dfrac{1}{\; 2 \;}\Bigl(\dfrac{\; a^2 \;}{\; 2^{1/\beta_k} \;}-\varepsilon^{1/(2t+2)} \Bigr),  \; 
k=1, \dots, t+1,  \nn \\
&||f_{(2k)}(Z)+g_{(2k)}(Z)||^2<
\dfrac{1}{\; 2 \;}\Bigl(\dfrac{\; a^2 \;}{\; 2^{1/\beta_k} \;}-\varepsilon^{1/(2t+2)}\Bigr), \; 
k=1, \dots, t,   \nn  \\
&||f_{(2t+2)}(Z)+g_{(2t+2)}(Z)||^2+||h(Z)||^2<
\dfrac{1}{\; 2 \;}\Bigl(\dfrac{\; a^2 \;}{2}-\varepsilon^{1/(2t+2)}\Bigr)   \nn
\end{align}
hold. 
Then the following two inequalities 
\begin{align}
&||f_{(2k-1)}(Z)+g_{(2k-1)}(Z)||^{2\beta_k}-||f_{(2k-1)}(Z)||^{2\beta_k}
>\Bigl(\dfrac{1}{\; 4 \;}\varepsilon^{1/(2t+2)} \Bigr)^{\beta_k}c,    \nn \\
&||f_{(2k)}(Z)+g_{(2k)}(Z)||^{2\beta_k}-||f_{(2k)}(Z)||^{2\beta_k}
>\Bigl(\dfrac{1}{\; 4 \;}\varepsilon^{1/(2t+2)} \Bigr)^{\beta_k}c  \nn
\end{align}
hold for $k=1, \dots, t+1$ and for some positive constant $c$. 
\end{lemma}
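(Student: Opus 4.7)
The plan is to construct each component $g^i$ of $G$ as $g^i(Z)=\gamma_i\phi_i(Z)$, where $\phi_i$ is the peaking function introduced before Lemma~\ref{gestimate} and $\gamma_i\in\mathbb{C}$ is a coefficient to be chosen. The free parameters are the exponent $\nu$, the covering radius $r$, and the size and phase of $\gamma_i$. I first pick $|\gamma_i|$ comparable to $\sqrt{\varepsilon^{1/(2t+2)}/4}$, then set $\arg\gamma_i=-\arg f^i(W_i)$ so that $\overline{f^i(W_i)}\gamma_i\geq 0$; these choices respect $|\gamma_i|\leq 1$ for $\varepsilon$ small. Finally I pick $\eta>0$ much smaller than $\varepsilon$ together with the corresponding $\nu,r$ via Lemma~\ref{gestimate}, so that $|g^i(Z)|<\eta$ whenever $\|W_i-Z\|>\lambda r$.

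For (ii), I apply Lemma~\ref{re1-t}: on $\overline{E((m);(\alpha))_T}$ one has $|\phi_i(Z)|\leq e^{-\nu(1-1/T)}$, so enlarging $\nu$ (permitted once $r$ is fixed, since Lemma~\ref{gestimate} only imposes a lower bound on $\nu r^{2\alpha_{\mathrm{min}}}$) makes the sum of $|g^i|$ over a single block smaller than $n_k e^{-\nu(1-1/T)}<\varepsilon$, which is (ii). For (i), a point $Z\in\pa E((m);(\alpha))$ lies in some $B(W_{i_0},\lambda r)$. By the copy construction $W_{i+n_k}=W_i$, exactly two indices, one in block $g_{(2k_0-1)}$ and one in $g_{(2k_0)}$, share this centre, while every other $g^j$ satisfies $|g^j(Z)|<\eta$. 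I then expand $\|f_{(j)}+g_{(j)}\|^{2\beta_{k'}}$ block by block via a binomial/Taylor expansion in $g_{(j)}$, control the single ``large'' perturbation of size $\lesssim|\gamma_{i_0}|$ and the many ``small'' ones by $\eta$, and choose $\eta$ and $|\gamma_i|$ small enough that the additional contribution falls below $\varepsilon$ beyond the starting bound $\sum a^{2\beta_k}$.

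For (iii), the elementary inequality $v^{\beta_k}-u^{\beta_k}\geq(v-u)^{\beta_k}$ for $v\geq u\geq 0$ and $\beta_k\in\mathbb{N}$ reduces the task to producing a lower bound of order $\varepsilon^{1/(2t+2)}/4$ for $\|f_{(2k-1)}+g_{(2k-1)}\|^2-\|f_{(2k-1)}\|^2=2\mathrm{Re}\langle f_{(2k-1)},g_{(2k-1)}\rangle+\|g_{(2k-1)}\|^2$ at the given $Z$. The hypothesis of (iii) forces $Z$ to lie in some $B(W_{i_0},\lambda r)$ with $W_{i_0}$ in the $(2k-1)$-block, since outside all such balls every $g^i$ in the block is $O(\eta)$, contradicting the smallness of $\|f_{(2k-1)}+g_{(2k-1)}\|^2$ required by the hypothesis when combined with the bound on $\|f_{(2k-1)}\|$ coming from (b). The chosen phase of $\gamma_{i_0}$ makes $\mathrm{Re}(\overline{f^{i_0}(W_{i_0})}\gamma_{i_0})\geq 0$, and continuity of $f^{i_0}$ together with the smallness of the phase oscillation of $\phi_{i_0}$ on $B(W_{i_0},\lambda r)$ (controlled by $\nu\lambda r$) transports this to $2\mathrm{Re}(\overline{f^{i_0}(Z)}g^{i_0}(Z))\geq-|\gamma_{i_0}|^2/2$. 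Combined with $|g^{i_0}(Z)|^2\gtrsim|\gamma_{i_0}|^2\geq\varepsilon^{1/(2t+2)}/4$ this yields the required gap, and raising to the $\beta_k$-th power via the elementary inequality gives the claimed estimate with an explicit constant $c$.

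The main obstacle is (iii): it demands a simultaneous calibration of $\nu$, $r$, and $|\gamma_i|$ such that $\nu$ is large (for (ii) and for phase stability of $\phi_i$), $r$ is small (for phase control on $B(W_i,\lambda r)$), yet $\lambda$ itself depends on $r$ through (\ref{aabb}), and $|\gamma_i|^2\gtrsim\varepsilon^{1/(2t+2)}/4$ without driving $|||F+G|||^{2\beta}$ past $\varepsilon+\sum a^{2\beta_k}$ in (i). Verifying that these competing constraints are jointly satisfiable is the delicate step that underpins the entire construction.
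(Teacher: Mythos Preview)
Your choice of phase and magnitude for $\gamma_i$ is incompatible with part~(i). With $\arg\gamma_i$ set so that $\overline{f^i(W_i)}\,\gamma_i\geq 0$ and $|\gamma_i|^2\sim\varepsilon^{1/(2t+2)}/4$, the increment at $Z\approx W_i$ is
\[
|f^i(Z)+g^i(Z)|^2-|f^i(Z)|^2\;\approx\;2\,|f^i(W_i)|\,|\gamma_i|+|\gamma_i|^2,
\]
and since nothing in hypotheses (a)--(d) prevents $|f^i(W_i)|$ from being of order $a$, the cross term dominates and is of order $a\,\varepsilon^{1/(4t+4)}$. After raising to the $\beta_k$-th power and summing over blocks, $|||F+G|||^{2\beta}-|||F|||^{2\beta}$ is at least of order $\varepsilon^{1/(4t+4)}\gg\varepsilon$, so (i) fails at any $Z$ where $|||F(Z)|||^{2\beta}$ is already close to $\sum_k a^{2\beta_k}$. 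Shrinking $|\gamma_i|$ to repair this destroys (iii), exactly the tension you flag but leave open. The paper resolves it by two different choices: the phase is taken \emph{orthogonal}, $\mathrm{Re}\bigl(f^i(W_i)\overline{\gamma_i}\bigr)=0$, so the cross term is $O(\eta)$ near $W_i$; and $|\gamma_i|^2$ is not a fixed power of $\varepsilon$ but a normalized share of the gap $a^2/2^{1/\beta_k}-\|f_{(2k-1)}(W_i)\|^2$. Summing then drives $\|f_{(2k-1)}+g_{(2k-1)}\|^2$ up to, and not past, $a^2/2^{1/\beta_k}$, which is (i). Part~(iii) is obtained by reading the gap backwards: the hypothesis on $\|f_{(2k-1)}+g_{(2k-1)}\|^2$ forces $\|f_{(2k-1)}(W_i)\|^2<a^2/2^{1/\beta_k}-\tfrac12\varepsilon^{1/(2t+2)}$, hence $\sum_i|\gamma_i|^2>\tfrac12\varepsilon^{1/(2t+2)}$, and this feeds the lower bound on $D_{2k-1}$.

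Two further gaps. Your claim that ``exactly two indices share the centre $W_{i_0}$ while every other $g^j$ satisfies $|g^j(Z)|<\eta$'' is unjustified: the balls $B(W_i,\lambda r)$ may overlap heavily at $Z$, and Lemma~\ref{gestimate} gives $|g^i(Z)|<\eta$ only when $\|W_i-Z\|>\lambda r$. The paper handles this by introducing the index sets $I_{(2k-1)}(Z)$, $I_{(2k)}(Z)$ and dividing $|\gamma_i|^2$ by their cardinality. And your argument that the hypothesis of (iii) ``forces $Z$ to lie in some $B(W_{i_0},\lambda r)$ with $W_{i_0}$ in the $(2k-1)$-block'' is fallacious: if $g_{(2k-1)}(Z)=O(\eta)$ and $\|f_{(2k-1)}(Z)\|$ happens to be small, the hypothesis holds, but then $D_{2k-1}=O(\eta)$ and the conclusion of (iii) fails outright. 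Assumption~(b) gives only an \emph{upper} bound on $\|f_{(2k-1)}\|$, so it cannot produce the contradiction you invoke.
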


\begin{proof}
Let $\lambda$ and $r$ be as before. 
We define the sets of indices $i$ as  
\begin{align}
&I_{(2k-1)}(Z)=\{i :  2N_{k-1}+1 \leq i \leq 2N_{k-1}+n_k, Z \in B(W_i, \lambda r) \},   \nn   \\
&I_{(2k)}(Z)=\{i : 2N_{k-1}+n_k+1 \leq i \leq 2N_{k-1}+2n_k, Z  \in B(W_i, \lambda r) \}  \nn
\end{align}
for $k=1, \dots, t+1$. 
Note that 
\begin{equation}
I_{(2k)}(Z)=\{ i+n_k \; : \; i \in  I_{(2k-1)}(Z) \}.   \nn
\end{equation} 
Let $g^i$ be a function of the form (\ref{gi}), 
where $\gamma_{i}$ and $\gamma_{i+n_k}$ are determined by 
\begin{align}
&|\gamma_i|^2=\dfrac{1}{\; \sharp I_{(2k-1)}(Z) \;}
\left\{\dfrac{a^2}{\; 2^{1/\beta_k} \;}-||f_{(2k-1)}(W_i)||^2 \right\},      \nn \\
&|\gamma_{i+n_k}|^2=\dfrac{1}{\; \sharp I_{(2k)}(Z) \;} 
\left\{\dfrac{a^2}{\; 2^{1/\beta_k} \;}-||f_{(2k)}(W_i)||^2 \right\}  \nn  
\end{align}
for $k=1, \dots, t, i=2N_{k-1}+1, \dots, 2N_{k-1}+n_k$
and by 
\begin{align}
&|\gamma_i|^2=\dfrac{1}{\; \sharp I_{(2t+1)}(Z) \;}
\left\{\dfrac{\; a^2 \;}{2}-||f_{(2t+1)}(W_i)||^2 \right\},  \nn \\ 
&|\gamma_{i+n_{t+1}}|^2=\dfrac{1}{\; \sharp I_{(2t+2)}(Z) \;}
\left\{\dfrac{\; a^2 \;}{2}-||f_{(2t+2)}(W_i)||^2-||h(W_i)||^2 \right\} \nn  
\end{align}
for $i=2N_{t}+1, \dots, 2N_{t}+n_{t+1}$ and by 
\begin{align}
&f^i(W_i)\overline{\gamma_{i}}+\overline{f^{i}(W_i)}\gamma_{i}=0,   \nn \\ 
&f^{i+n_k}(W_i)\overline{\gamma_{i+n_k}}
+\overline{f^{i+n_k}(W_i)}\gamma_{i+n_k}=0.   \nn 
\end{align}
We gather the inequalities derived from the continuously extendability to 
$\overline{E((m);(\alpha))}$, which will be used later. 
For any $\eta$, we can find $r>0$ which satisfies Lemma~{\ref{gestimate}} 
such that if $Z \in B(W_i, \lambda r)$, 
then the following inequalities hold,   
\begin{align}
& \left| \; f^i(W_i)-f^i(Z) \; \right|<\eta,  \;  
\left| \; |f^i(W_i)|-|f^i(Z)| \; \right|<\eta,  \nn \\
& \left| \; ||f_{(2k-1)}(W_i)||^2-||f_{(2k-1)}(Z)||^2 \; \right|<\eta,  \; 
\left| \; ||f_{(2k)}(W_i)||^2-||f_{(2k)}(Z)||^2 \; \right|<\eta,    \nn  \\
&\left| \; ||h(W_i)||^2-||h(Z)||^2 \; \right|<\eta.   \nn
\end{align}

Proof (i). 
First, we calculate the case $k=1, \dots, t+1$. 
\begin{align}
&||f_{(2k-1)}(Z)+g_{(2k-1)}(Z)||^{2\beta_k}   \nn \\ 
&=
\Biggl[\sum_{\substack{i \not\in I_{(2k-1)}(Z)  \\ 2N_{k-1}+1 
\leq i \leq 2N_{k-1}+n_k}}   |f^{i}(Z)+g^{i}(Z)|^{2}
+\sum_{i \in I_{(2k-1)}(Z)} 
|f^{i}(Z)+g^{i}(Z)|^{2}
\Biggr]^{\beta_k}.   \nn
\end{align}
If $i \not\in I_{(2k-1)}(Z), 2N_{k-1}+1 \leq i \leq 2N_{k-1}+n_k$, then $|g^i(Z)|<\eta$, 
hence we have 
\begin{equation}
|f^i(Z)+g^i(Z)|^2<|f^i(Z)|^2+3\eta.  \nn
\end{equation}
On the other hand, if $i \in I_{(2k-1)}(Z)$, then, by continuity of 
$f^i(Z), |f^i(Z)|$ and $||f_{(2k-1)}(Z)||^2$ and by $|f^i(W_i)+g^i(Z)|<2$, 
we have 
\begin{align}
&|f^i(Z)+g^i(Z)|^2  \nn  \\
&<\{|f^i(W_i)-f^i(Z)|+|f^i(W_i)+g^i(Z)| \}^2  \nn \\
&<5\eta+|f^i(W_i)|^2+|\gamma_i|^2. \nn \\
&<\Bigl\{8+\dfrac{1}{\; \sharp I_{(2k-1)}(Z) \;} \Bigr\}\eta
+\dfrac{1}{\; \sharp I_{(2k-1)}(Z) \;}
\Bigl\{
\dfrac{a^2}{\; 2^{1/\beta_k} \;}-||f_{(2k-1)}(Z)||^2 \Bigr\}
+|f^i(Z)|^2.    \nn
\end{align}
Summing over indices $i=2N_{k-1}+1, \dots, 2N_{k-1}+n_k$, we obtain  
\begin{align}
&||f_{(2k-1)}(Z)+g_{(2k-1)}(Z)||^{2\beta_k}  \nn \\
&<
\Biggl[\sum_{\substack{i \not\in I_{(2k-1)}(Z)  \\ 2N_{k-1}+1 \leq i \leq 2N_{k-1}+n_k}}
\{|f^i(Z)|^2+3\eta\}+
\sum_{i \in I_{(2k-1)}(Z)} 
\Bigl\{
\bigl(8+\dfrac{1}{\; \sharp I_{(2k-1)}(Z) \;} \bigr)\eta  \nn \\
&\qquad \qquad 
+\dfrac{1}{\; \sharp I_{(2k-1)}(Z) \;}
\bigl(
\dfrac{a^2}{\; 2^{1/\beta_k} \;}-||f_{(2k-1)}(Z)||^2 \bigr)
+|f^i(Z)|^2
\Bigr\}
\Biggr]^{\beta_k}   \nn \\
&=
\Bigl[\bigl\{1+\sum_{\substack{i \not\in I_{(2k-1)}(Z)  \\ 2N_{k-1}+1 \leq i \leq 2N_{k-1}+n_k}}3+
\sum_{i \in I_{(2k-1)}(Z)}8 \bigr\}\eta
+\dfrac{a^2}{\; 2^{1/\beta_k} \;} \Bigr]^{\beta_k}.  \nn
\end{align}
We can calculate 
$||f_{(2k)}(Z)+g_{(2k)}(Z)||^{2\beta_k}$ in the same way.  
The calculation for $||f_{(2t+2)}(Z)+g_{(2t+2)}(Z)||^2+||h(Z)||^2$ also goes in the same way except for 
the definition of $\gamma_i$ and the result is 
\begin{align}
&||f_{(2t+2)}(Z)+g_{(2t+2)}(Z)||^2+||h(Z)||^2  \nn \\
&<
\bigl\{2+\sum_{\substack{i \not\in I_{(2t+2)}(Z)  \\ 2N_{t}+n_{t+1}+1 \leq i \leq 2N_{t}+2n_{t+1}}}3+
\sum_{i \in I_{(2t+2)}(Z)}8 \bigr\}\eta
+\dfrac{\; a^2 \;}{2}. \nn 
\end{align}
Summarizing above results, we obtain 
\begin{align}
&|||F(Z)+G(Z)|||^{2\beta}    \nn \\
&<\sum^{t+1}_{k=1}
\Biggl[
\Big\{\bigl(1+\sum_{\substack{i \not\in I_{(2k-1)}(Z)  \\ 2N_{k-1}+1 \leq i \leq 2N_{k-1}+n_k}}3+
\sum_{i \in I_{(2k-1)}(Z)}8 \bigr)\eta
+\dfrac{a^2}{\; 2^{1/\beta_k} \;} \Bigr\}^{\beta_k}   \nn \\
&\qquad \qquad 
+\Bigl\{\bigl(1+\sum_{\substack{i \not\in I_{(2k)}(Z)  \\ 2N_{k-1}+n_k+1 \leq i \leq 2N_{k-1}+2n_k}}3+
\sum_{i \in I_{(2k)}(Z)}8 \bigr)\eta
+\dfrac{a^2}{\; 2^{1/\beta_k} \;} \Bigr\}^{\beta_k}
\Biggr]
+\eta   \nn\\
&=
\sum^{t+1}_{k=1}a^{2\beta_k}  \nn \\
&\quad 
+\sum^{t+1}_{k=1}
\Biggl[
\sum^{\beta_k}_{j=1}
\begin{pmatrix}
\beta_k \\
j
\end{pmatrix}
\eta^j
\Bigl(\dfrac{a^2}{\; 2^{1/\beta_k} \;} \Bigr)^{\beta_k-j}
\Big\{\bigl(
1+\sum_{\substack{i \not\in I_{(2k-1)}(Z)  \\ 2N_{k-1}+1 \leq i \leq 2N_{k-1}+n_k}}3+
\sum_{i \in I_{(2k-1)}(Z)}8 \bigr)^j    \nn \\
&\qquad \qquad \qquad 
+\bigl(1+\sum_{\substack{i \not\in I_{(2k)}(Z)  \\ 2N_{k-1}+n_k+1 \leq i \leq 2N_{k-1}+2n_k}}3+
\sum_{i \in I_{(2k)}(Z)}8
\bigr)^j  \Bigr\}
\Biggr]+\eta. \nn
\end{align}
For sufficiently small $\varepsilon>0$, we can choose $\eta$ such that 
\begin{align}
&\sum^{t+1}_{k=1}
\Biggl[
\sum^{\beta_k}_{j=1}
\begin{pmatrix}
\beta_k \\
j
\end{pmatrix}
\eta^j
\Bigl(\dfrac{a^2}{\; 2^{1/\beta_k} \;} \Bigr)^{\beta_k-j}
\Bigl[\bigl\{
1+\sum_{\substack{i \not\in I_{(2k-1)}(Z)  \\ 2N_{k-1}+1 \leq i \leq 2N_{k-1}+n_k}}3+
\sum_{i \in I_{(2k-1)}(Z)}8 \bigr\}^j   \label{etavarepsilon} \\
&\qquad \qquad \qquad 
+\bigl\{1+\sum_{\substack{i \not\in I_{(2k)}(Z)  \\ 2N_{k-1}+n_k+1 \leq i \leq 2N_{k-1}+2n_k}}3+
\sum_{i \in I_{(2k)}(Z)}8
\bigr\}^j  \Bigr]
\Biggr]+\eta
<\varepsilon.    \nn
\end{align}
Note that $O(\varepsilon)=O(\eta)$. We shall use this fact later. 
This proves (i). 

Proof (ii). 
By Lemma~{\ref{re1-t}} with $W=W_i$, 
we obtain the inequality, 
 \begin{align} 
&|g^{2N_{k-1}+1}(Z)|+\dots+|g^{2N_{k-1}+n_k}(Z)|     \nn \\
&<
e^{-\nu\text{Re} \langle W_{2N_{k-1}+1}-Z, N(W_{2N_{k-1}+1})/||N(W_{2N_{k-1}+1})|| \rangle}  \nn \\
&\qquad +\dots+
e^{-\nu\text{Re} \langle W_{2N_{k-1}+n_k}-Z, N(W_{2N_{k-1}+n_k})/||N(W_{2N_{k-1}+n_k})|| \rangle}  \nn \\
&<
n_ke^{-\nu(1-1/T)}. \nn
\end{align}
For any $\varepsilon>0$, we can find $\nu>0$ in Lemma~{\ref{gestimate}} such that 
$n_ke^{-\nu(1-1/T)}<\varepsilon$ holds.  
The second estimate is proved by the same manner. 
This proves (ii). 

Proof (iii). 
For $k=1, \dots, t+1$, we let 
\begin{equation} 
D_{2k-1}=||f_{(2k-1)}(Z)+g_{(2k-1)}(Z)||^{2\beta_k}-||f_{(2k-1)}(Z)||^{2\beta_k}    \nn
\end{equation}
and claim that it satisfies 
\begin{equation}
D_{2k-1}>\Bigl(\dfrac{1}{\; 4 \;}\varepsilon^{1/(2t+2)} \Bigr)^{\beta_k}c \label{d2k-1estimate}
\end{equation}
for some positive constant $c$. 

If $i \not \in I_{(2k-1)}(Z)$, then 
\begin{equation}
|f^i(Z)+g^i(Z)|^2=|f^i(Z)|^2+O(\eta).  \label{d2k-11}
\end{equation}
If $i \in I_{(2k-1)}(Z)$, then, by continuity of $f^i(Z)$, 
we have 
\begin{align}
|f^i(Z)+g^i(Z)|^2=&|f^i(Z)-f^i(W_i)+f^i(W_i)+\gamma_i\phi_i(Z)|^2   \label{d2k-12}  \\
=&|f^i(W_i)|^2+|\gamma_i|^2|\phi_i(Z)|^2+O(\eta)   \nn 
\end{align}
and 
\begin{equation}
|f^i(Z)|^2=|f^i(Z)-f^i(W_i)+f^i(W_i)|^2=
|f^i(W_i)|^2+O(\eta). \label{d2k-13}
\end{equation}
Therefore 
(\ref{d2k-11}), (\ref{d2k-12}) and (\ref{d2k-13}) imply that 
\begin{align}
&D_{2k-1}  \label{d2k-1}\\
&=\Bigl\{
\sum_{\substack{i \not\in I_{(2k-1)}(Z)  \\ 2N_{k-1}+1 \leq i \leq 2N_{k-1}+n_k}}|f^i(Z)|^2 \nn \\
&\qquad \qquad +
\sum_{i \in I_{(2k-1)}(Z)}
\bigl(|f^i(W_i)|^2+|\gamma_i|^2|\phi_i(Z)|^2 \bigr)+O(\eta)\Bigr\}^{\beta_k}  \nn  \\
&\qquad 
-\Bigl\{
\sum_{\substack{i \not\in I_{(2k-1)}(Z)  \\ 2N_{k-1}+1 \leq i \leq 2N_{k-1}+n_k}}|f^i(Z)|^2+
\sum_{i \in I_{(2k-1)}(Z)}
|f^i(W_i)|^2+O(\eta)\Bigr\}^{\beta_k}  \nn  \\
&>O(\eta).  \nn
\end{align}
Suppose that 
$||f_{(2k-1)}(Z)+g_{(2k-1)}(Z)||^2<
\dfrac{1}{\; 2 \;}\Bigl(a^2/2^{1/\beta_k}-\varepsilon^{1/(2t+2)}\Bigr)$
for some $Z \in \pa E((m);(\alpha))$. 
Choose a ball $B(W_i, \lambda r)$ containing $Z$. 
Then the continuity of $||f_{(2k-1)}(Z)||^2$ and inequality in (\ref{d2k-1}) 
imply that 
\begin{align}
||f_{(2k-1)}(W_i)||^2&<\eta+||f_{(2k-1)}(Z)||^2  \nn \\
&<||f_{(2k-1)}(Z)+g_{(2k-1)}(Z)||^2+O(\eta)  \nn \\
&<\dfrac{1}{\; 2 \;}\Bigl(\dfrac{a^2}{\; 2^{1/\beta_k} \;}-\varepsilon^{1/(2t+2)}\Bigr)+O(\eta). \nn
\end{align}
Since $O(\eta)=O(\varepsilon)$, 
we have 
\begin{equation}
||f_{(2k-1)}(W_i)||^2<\dfrac{a^2}{\; 2^{1/\beta_k} \;}-\dfrac{1}{\; 2 \;}\varepsilon^{1/(2t+2)}. \nn
\end{equation}
This estimate implies 
\begin{align} 
\sum_{i \in I_{(2k-1)}(Z)}|\gamma_i|^2
>\dfrac{1}{\; 2 \;}\varepsilon^{1/(2t+2)}.    \nn 
\end{align}
Since the boundary $\pa E((m);(\alpha))$ is compact, 
there exists $r_0>0$ such that, for any $i$ and $Z \in \pa E((m);(\alpha))$, 
$||W_i-Z||<r_0$ holds.  
Therefore, in view of (\ref{phiest}), 
we have 
$|\phi_i(Z)|>e^{-\nu A||W_i-Z||^{2p}}>e^{-\nu Ar^{2p}_{0}}$. 
Substitute this into the equality in (\ref{d2k-1}) to obtain (\ref{d2k-1estimate}) as 
\begin{equation}
D_{2k-1}>\Bigl(
\dfrac{1}{\; 2 \;}\varepsilon^{1/(2t+2)}e^{-2\nu Ar^{2p}_{0}}
\Bigr)^{\beta_k}+O(\varepsilon)>
\Bigl(\dfrac{1}{\;4 \;}\varepsilon^{1/(2t+2)} \Bigr)^{\beta_k}c \nn
\end{equation}
for some positive constant $c$. 
Applying the same argument to  
$D_{2k}$ for $k=1, \dots, t+1$,
we obtain the same estimates as (\ref{d2k-1estimate}), which are the conclusions of (iii). 
\end{proof}

Let $h=(h_1, \dots, h_p)$ be a holomorphic mapping on $E((m);(\alpha))$ such that 
$||h||^2$ extends continuously to $\overline{E((m);(\alpha))}$. 
Assume $\text{max}\{\beta_1, \dots, \beta_{t}\}<t+1$. 

We choose two sequences $\{a_{\ell}\}$ and $\{\varepsilon_{\ell}\}$ 
satisfying the following conditions.

$\mathrm{(i)}$  
$\{a_{\ell}\}$ is a positive increasing sequence and 
$\{\varepsilon_{\ell}\}$ is a positive decreasing sequence. 

$\mathrm{(ii)}$  
$\{\sum^{t+1}_{k=1}a^{2\beta_k}_{\ell}\}$ is strictly increasing to $1$, 

$\mathrm{(iii)}$
$\{\varepsilon_{\ell}\}$ decreases to $0$,  
$\sum_{\ell} \varepsilon^{1/2}_{\ell}$ converges and 
$\sum_{\ell}\varepsilon^{\text{max}\{\beta_k\}/(2t+2)}_{\ell}$ diverges. 

$\mathrm{(iv)}$
$\sum^{t+1}_{k=1}a^{2\beta_k}_{\ell-1}+\varepsilon_{\ell}<\sum^{t+1}_{k=1}a^{2\beta_k}_{\ell}
\; \text{and} \; 
\text{sup}_{\pa E((m);(\alpha))}||h||^2<\sum^{t+1}_{k=1}a^{2\beta_k}_{0}$. 

Put $F_0(Z)=(0, \dots, 0, h(Z))$. 
Using Lemma~{\ref{mainlemma}}, we construct a sequence of entire mappings $\{G_{\ell}\}$ 
and a positive decreasing sequence $\{T_{\ell}\}$ inductively starting from $F_0$. 
Put $F_{\ell}=F_0+\sum^{\ell}_{j=1}G_j$. 

\begin{proposition}\label{inductiveproposition} 
Take $\{a_{\ell}\}, \{\varepsilon_{\ell}\}$ and $h$ as above. 
Then we can construct a sequence $\{G_{\ell}\}$ of entire mappings of the form 
\begin{equation}
G_{\ell}=(g_{(1)\ell}, g_{(2)\ell}, \dots, g_{(2t+1)\ell}, g_{(2t+2)\ell}, 0) : 
\mathbb{C}^{M_{s+1}} \to \mathbb{C}^{2N_{t+1}+p},    \nn
\end{equation}
where the components of $G_{\ell}$ are of the form $\mathrm(\ref{gi})$ and  
a decreasing sequence $\{T_{\ell}\}$ with $\lim_{\ell \to \infty}T_{\ell}=1$ and 
$\bigcup_{\ell}E((m);(\alpha))_{T_{\ell}}=E((m);(\alpha))$ 
inductivety satisfying the following. 

$\mathrm{(i)}$  
$|||F_{\ell-1}(Z)|||^{2\beta} \geq \mathrm{min}_{W \in \pa E((m);(\alpha))}
|||F_{\ell-1}(W)|||^{2\beta}-2^{-\ell}$ \\
for $Z \in \overline{E((m);(\alpha))} \backslash \overline{E((m);(\alpha))_{T_{\ell}}}$. 

$\mathrm{(ii)}$ 
$|||F_{\ell}(Z)|||^{2\beta}<\sum^{t+1}_{k=1}a^{2\beta_k}_{\ell}$ for 
$Z \in \overline{E((m);(\alpha))}$. 

$\mathrm{(iii)}$ 
\begin{equation}
\sum^{2N_{k-1}+n_k}_{i=2N_{k-1}+1}|g^i_{\ell}(Z)|<\varepsilon_{\ell}, \quad 
\sum^{2N_{k-1}+2n_k}_{i=2N_{k-1}+n_k+1}|g^i_{\ell}(Z)|<\varepsilon_{\ell}, \nn
\end{equation}
where 
$g_{(2k-1) \ell}=(g^{2N_{k-1}+1}_{\ell}, \dots, g^{2N_{k-1}+n_k}_{\ell})$
and 
$g_{(2k) \ell}=(g^{2N_{k-1}+n_k+1}_{\ell}, \dots, g^{2N_{k-1}+2n_k}_{\ell})$
for $Z \in \overline{E((m);(\alpha))_{T_{\ell}}}$. 

$\mathrm{(iv)}$ 
Let $Z \in \pa E((m);(\alpha))$. 
Assume that the following three inequalities  
\begin{align}
& ||f_{(2k-1)\ell}(Z)||^2<
\dfrac{1}{\; 2 \;}\Bigl(\dfrac{\; a^2_{\ell-1} \;}{\; 2^{1/\beta_k} \;}-\varepsilon^{1/(2t+2)}_{\ell}\Bigr), 
\; k=1, \dots, t+1, \nn \\
&||f_{(2k)\ell}(Z)||^2<
\dfrac{1}{\; 2 \;}\Bigl(\dfrac{\; a^2_{\ell-1} \;}{\; 2^{1/\beta_k} \;}-\varepsilon^{1/(2t+2)}_{\ell} \Bigr), 
\; k=1, \dots, t,   \nn \\
&||f_{(2t+2)\ell}(Z)||^2+||h(Z)||^2<
\dfrac{1}{\; 2 \;}\Bigl(\dfrac{\; a^2_{\ell-1} \;}{2}-\varepsilon^{1/(2t+2)}_{\ell}\Bigr)  \nn
\end{align}
hold. 
Then two estimates 
\begin{align}
&||f_{(2k-1)\ell}(Z)||^{2\beta_k}-||f_{(2k-1)\ell-1}(Z)||^{2\beta_k}
>\Bigl(\dfrac{1}{\; 4 \;}\varepsilon^{1/(2t+2)}_{\ell}\Bigr)^{\beta_k}c,   \nn \\
&||f_{(2k)\ell}(Z)||^{2\beta_k}-||f_{(2k)\ell-1}(Z)||^{2\beta_k}
>\Bigl(\dfrac{1}{\; 4 \;}\varepsilon^{1/(2t+2)}_{\ell}\Bigr)^{\beta_k}c   \nn
\end{align}
hold for $k=1, \dots, t+1$ and for some positive constant $c$. 
\end{proposition}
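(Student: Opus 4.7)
The plan is to argue by induction on $\ell$, with Lemma~\ref{mainlemma} supplying the inductive step; the sequences $\{a_\ell\}$ and $\{\varepsilon_\ell\}$ are calibrated precisely so that the lemma's conclusion at stage $\ell$ reproduces its hypotheses at stage $\ell+1$. For the base case $F_0 = (0, \dots, 0, h)$, the first $2N_{t+1}$ components vanish, so hypotheses~(b), (c) of the lemma are automatic, (d) is the standing assumption on $h$, and $|||F_0|||^{2\beta} = ||h||^2 < \sum_{k=1}^{t+1} a_0^{2\beta_k}$ on $\partial E((m);(\alpha))$ is condition~(iv) on the sequences.

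For the inductive step, suppose $F_{\ell-1}$ has been constructed satisfying (ii)--(iv). First I would choose $T_\ell$. Since each $g^i_j$ is entire and $||h||^2$ extends continuously to $\overline{E((m);(\alpha))}$, the function $|||F_{\ell-1}|||^{2\beta}$ is continuous on $\overline{E((m);(\alpha))}$, so by taking $T_\ell \in (1, T_{\ell-1})$ sufficiently close to $1$, the thin collar $\overline{E((m);(\alpha))} \setminus \overline{E((m);(\alpha))_{T_\ell}}$ lies so close to $\partial E((m);(\alpha))$ that $|||F_{\ell-1}|||^{2\beta}$ stays within $2^{-\ell}$ of its minimum on $\partial E((m);(\alpha))$, which is exactly~(i). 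I would also require $T_\ell \to 1$ so that $\bigcup_\ell E((m);(\alpha))_{T_\ell} = E((m);(\alpha))$.

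Next I would apply Lemma~\ref{mainlemma} to $F_{\ell-1}$ with parameters $a = a_{\ell-1}$, $\varepsilon = \varepsilon_\ell$, $T = T_\ell$. Hypothesis~(a), namely $\varepsilon_\ell + \sum_k a_{\ell-1}^{2\beta_k} < 1$, follows from conditions~(ii) and~(iv) on the sequences; (b) is the inductive hypothesis; the component bound $|f^i_{\ell-1}| < 1$ required in~(c) follows from $|||F_{\ell-1}|||^{2\beta} < 1$; and (d) is unchanged. Setting $F_\ell = F_{\ell-1} + G_\ell$, conclusions~(ii) and~(iii) of the lemma deliver properties~(iii) and~(iv) of the proposition directly, since $f_{(j)\ell} = f_{(j)\ell-1} + g_{(j)\ell}$. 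To obtain~(ii), conclusion~(i) of the lemma gives the bound $|||F_\ell|||^{2\beta} < \varepsilon_\ell + \sum_k a_{\ell-1}^{2\beta_k} < \sum_k a_\ell^{2\beta_k}$ only on $\partial E((m);(\alpha))$; but $F_\ell$ is holomorphic on $E((m);(\alpha))$, each $||f_{(j)\ell}||^{2\beta_k}$ is plurisubharmonic as an integer power of the PSH function $\sum_i |f^i_\ell|^2$, and $||h||^2$ is PSH, so $|||F_\ell|||^{2\beta}$ is plurisubharmonic on $E((m);(\alpha))$ and continuous on the closure, whence the maximum principle extends the estimate to all of $\overline{E((m);(\alpha))}$.

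The main obstacles are bookkeeping in nature: coordinating the choice of $T_\ell$ so that it is simultaneously decreasing, tends to $1$, and is close enough to $1$ to guarantee~(i); and verifying that the componentwise bound $|f^i_\ell| < 1$ propagates through the induction, which follows from the $|||\cdot|||^{2\beta} < 1$ estimate. The one genuinely analytic step is the plurisubharmonicity-plus-maximum-principle promotion of the boundary estimate to the full closure, which is what makes the inductive hypothesis~(ii) self-reproducing and must be invoked explicitly at each stage.
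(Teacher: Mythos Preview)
Your proposal is correct and follows essentially the same inductive scheme as the paper: choose $T_\ell$ from continuity of $|||F_{\ell-1}|||^{2\beta}$ to secure (i), then apply Lemma~\ref{mainlemma} with $(a,\varepsilon,T,F)=(a_{\ell-1},\varepsilon_\ell,T_\ell,F_{\ell-1})$ so that its conclusions (i)--(iii) yield (ii)--(iv) of the proposition. Your explicit plurisubharmonicity/maximum-principle argument promoting the boundary bound in Lemma~\ref{mainlemma}(i) to all of $\overline{E((m);(\alpha))}$ is a detail the paper passes over silently but which is indeed needed; otherwise the two arguments coincide (modulo an inessential shift in the indexing of $T_\ell$).
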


\begin{proof}
The case of $\ell=1$. 
First, we construct $G_1$. 
Since $|||F_0(Z)|||^{2\beta}=||h||^2$ is continuous on $\overline{E((m);(\alpha))}$, 
there exists $T_0>0$ satisfying (i). 
We apply Lemma~{\ref{mainlemma}} to the data 
$a=a_0, \varepsilon=\varepsilon_1, T=T_0$ and $F=F_0$ to find  
$G=G_1$. These data satisfy the assumptions of Lemma~{\ref{mainlemma}}. 
Therefore we can construct $G=G_1$ with the properties (i), (ii), (iii) 
of Lemma~{\ref{mainlemma}}, which implies the 
properties (ii), (iii) and (iv) of this proposition for $\ell=1$. 
Next the case of $\ell=2$. 
Now we have $F_1=F_0+G_1$. 
By the continuity of $F_1$ on $\overline{E((m);(\alpha))}$, we can find $T_1$ 
such that (i) holds with $\ell=2$.   
We apply Lemma~{\ref{mainlemma}} to the data $a=a_1, \varepsilon=\varepsilon_2, T=T_1$ 
and $F=F_1$ to find 
$G=G_2$ by the similar argument as $\ell=1$. 
We can proceed this process to find the sequences $\{G_{\ell}\}$ and $\{T_{\ell}\}$ with the desired 
properties. 
\end{proof}

\section{Construction of proper holomorphic mapping}

Property (iii) in Proposition~\ref{inductiveproposition} implies that 
$F=\lim F_\ell=F_0+\sum^{\infty}_{j=1}G_j$ converges uniformly on a compact subset of 
$E((m);(\alpha))$ and hence it defines a holomorphic mapping there.   
We will prove that the mapping $F$ is a proper holomorphic mapping 
between $E((m);(\alpha))$ and $E((n);(\beta))$. 
To this end, we need one lemma. 

\begin{lemma}\label{fgh}
Let $F=(f_1, \dots, f_n), G=(g_1, \dots, g_n)$ and $H=(h_1, \dots, h_n)$ be $n$-dimensional vectors. 
For $\mu=(\mu_1, \mu_2, \mu_3) \in \mathbb{Z}^{3}_{\geq 0}$, we put 
$\mu!=\mu_1!\mu_2!\mu_3!$ and $|\mu|=\mu_1+\mu_2+\mu_3$. 
Then the estimate 
\begin{equation}
||F+G+H||^{2\alpha} 
\leq 
||F||^{2\alpha}  
+\sum_{\substack{|\mu|=2\alpha   \\\mu_1\not=2\alpha}}
\dfrac{(2\alpha)!}{\mu!}
||G||^{\mu_2}||H||^{\mu_3}   \nn
\end{equation}
holds for $\alpha \in \mathbb{N}$. 
\end{lemma}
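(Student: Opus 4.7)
The strategy is a one-line application of the triangle inequality followed by the trinomial (multinomial) theorem. First I would invoke the triangle inequality, $||F+G+H||\leq ||F||+||G||+||H||$, and then raise both non-negative sides to the $2\alpha$-th power to obtain
\begin{equation*}
||F+G+H||^{2\alpha}\leq \bigl(||F||+||G||+||H||\bigr)^{2\alpha}.
\end{equation*}
Expanding the right-hand side by the multinomial theorem yields
\begin{equation*}
\bigl(||F||+||G||+||H||\bigr)^{2\alpha}=\sum_{|\mu|=2\alpha}\frac{(2\alpha)!}{\mu!}\,||F||^{\mu_1}||G||^{\mu_2}||H||^{\mu_3}.
\end{equation*}

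Next I would isolate the single term with $\mu=(2\alpha,0,0)$, which contributes precisely $||F||^{2\alpha}$, so that
\begin{equation*}
||F+G+H||^{2\alpha}\leq ||F||^{2\alpha}+\sum_{\substack{|\mu|=2\alpha\\\mu_1\neq 2\alpha}}\frac{(2\alpha)!}{\mu!}\,||F||^{\mu_1}||G||^{\mu_2}||H||^{\mu_3}.
\end{equation*}
This already matches the right-hand side of the lemma except for the factors $||F||^{\mu_1}$ in the remaining sum, which still need to be absorbed.

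The last step, removing each factor $||F||^{\mu_1}$, is the only non-routine point and will be the main obstacle in a completely general statement. In every summand we have $\mu_1<2\alpha$, so the factor $||F||^{\mu_1}$ can be dropped as soon as $||F||\leq 1$. This normalization is implicit in the intended use: the lemma will be applied to (a block of) the partial sum $F_\ell$ produced in Proposition~\ref{inductiveproposition}, for which $|||F_\ell|||^{2\beta}<\sum_{k=1}^{t+1}a_\ell^{2\beta_k}<1$, forcing each relevant sub-norm to lie in $[0,1]$. Granted this, $||F||^{\mu_1}\leq 1$ in every remaining term and the claimed inequality follows. Thus the entire proof reduces to the triangle inequality and the trinomial expansion, plus a check that the hypothesis $||F||\leq 1$ is always available where the lemma is invoked.
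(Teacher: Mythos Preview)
The paper states this lemma without proof, so there is no argument to compare against; your approach via the triangle inequality followed by the trinomial expansion is the natural one and is correct.

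Your observation that the inequality, as literally stated, fails without the extra hypothesis $\|F\|\le 1$ is well taken: for instance with $n=1$, $\alpha=1$, $F=2$, $G=1$, $H=0$ one gets $\|F+G+H\|^{2}=9$ while the right-hand side equals $7$. You correctly trace the application in Theorem~\ref{proper}, where the role of $F$ is played by a block $f_{(2k-1)}$ (or $f_{(2k)}$) of the limit map, and $|||F|||^{2\beta}<1$ forces each such block to satisfy $\|f_{(2k-1)}\|<1$. With that normalization in hand your last step, dropping the factors $\|F\|^{\mu_1}$ for $\mu_1<2\alpha$, is legitimate and completes the proof.
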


\begin{theorem}\label{proper}
The limit $F=\lim F_{\ell}$ is a proper holomorphic mapping between $E((m);(\alpha))$ and $E((n);(\beta))$.  
\end{theorem}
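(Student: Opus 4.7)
The plan has two components: show $F$ maps $E((m);(\alpha))$ into $E((n);(\beta))$, and prove properness in the form $|||F(Z)|||^{2\beta}\to 1$ as $Z\to\partial E((m);(\alpha))$. For the first, Proposition~\ref{inductiveproposition}(ii) gives $|||F_\ell(Z)|||^{2\beta}<\sum_{k=1}^{t+1}a^{2\beta_k}_\ell<1$ on $\overline{E((m);(\alpha))}$; letting $\ell\to\infty$ pointwise yields $|||F(Z)|||^{2\beta}\le 1$ on $E((m);(\alpha))$, and the strict inequality on the interior will follow a posteriori from properness combined with plurisubharmonicity of $|||F|||^{2\beta}$ and the maximum principle.

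For properness, let $Z_n\in E((m);(\alpha))$ with $Z_n\to\partial E((m);(\alpha))$, and define $\ell_n$ as the largest integer with $Z_n\notin\overline{E((m);(\alpha))_{T_{\ell_n}}}$. Since $T_\ell\searrow 1$ and $\bigcup_\ell E((m);(\alpha))_{T_\ell}=E((m);(\alpha))$, one has $\ell_n\to\infty$. For every $j\ge\ell_n+1$, $Z_n\in\overline{E((m);(\alpha))_{T_j}}$, so Proposition~\ref{inductiveproposition}(iii) gives $\|G_j(Z_n)\|\le C\varepsilon_j$. As $\sum\varepsilon_j^{1/2}<\infty$ forces $\sum\varepsilon_j<\infty$, one has $\|F(Z_n)-F_{\ell_n}(Z_n)\|\le C\sum_{j>\ell_n}\varepsilon_j\to 0$, and Lemma~\ref{fgh} (applied with $G=F-F_{\ell_n}$) upgrades this to $|||F(Z_n)|||^{2\beta}-|||F_{\ell_n}(Z_n)|||^{2\beta}\to 0$. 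It therefore suffices to show $|||F_{\ell_n}(Z_n)|||^{2\beta}\to 1$. Proposition~\ref{inductiveproposition}(i) with $\ell=\ell_n$ yields $|||F_{\ell_n-1}(Z_n)|||^{2\beta}\ge m_{\ell_n-1}-2^{-\ell_n}$, where $m_\ell:=\min_{W\in\partial E((m);(\alpha))}|||F_\ell(W)|||^{2\beta}$; the single remaining increment $G_{\ell_n}(Z_n)$ is absorbed using Lemma~\ref{gestimate} (which bounds the contribution from components outside a bounded number of covering balls) together with Lemma~\ref{fgh}. Matching this lower bound against the upper bound $|||F_{\ell_n}(Z_n)|||^{2\beta}<\sum a^{2\beta_k}_{\ell_n}\to 1$ reduces properness to the key claim $m_\ell\to 1$.

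To establish $m_\ell\to 1$, fix $W\in\partial E((m);(\alpha))$ and examine $|||F_\ell(W)|||^{2\beta}$ block-by-block via Proposition~\ref{inductiveproposition}(iv); an inspection of the proof of Lemma~\ref{mainlemma}(iii) shows that this implication decouples across blocks. For each block, $\|f_{(2k-1)\ell}(W)\|^{2\beta_k}$ is monotone non-decreasing in $\ell$ (a consequence of the phase choice $f^i(W_i)\overline{\gamma_i}+\overline{f^i(W_i)}\gamma_i=0$ in Lemma~\ref{mainlemma}) and bounded above by $1$. If the smallness hypothesis for the block held at infinitely many $\ell$, the growth estimate in (iv) combined with the divergence of $\sum_\ell\varepsilon^{\beta_k/(2t+2)}_\ell$ would force $\|f_{(2k-1)\ell}(W)\|^{2\beta_k}\to\infty$, a contradiction. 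Thus for all $\ell$ sufficiently large the smallness hypothesis fails; iterating this argument together with the explicit contraction-type choice $|\gamma_i|^2\propto a^2/2^{1/\beta_k}-\|f_{(2k-1)}(W_i)\|^2$ of Lemma~\ref{mainlemma} pushes $\|f_{(2k-1)\ell}(W)\|^2$ to its upper bound $a^2_\infty/2^{1/\beta_k}$. Summing over blocks and using $\sum_k a^{2\beta_k}_\infty=1$ gives $|||F_\ell(W)|||^{2\beta}\to 1$, with uniformity in $W$ coming from compactness of $\partial E((m);(\alpha))$ and continuous extendability of each $F_\ell$.

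The principal obstacle is the last paragraph: making the saturation argument sharp enough that the lower bound produced by (iv) actually matches the upper bound of (ii) in the limit, since merely exceeding the threshold of (iv) leaves a $2^{\beta_k}$ gap that must be closed by iterating the greedy construction of the coefficients $\gamma_i$. A secondary subtlety is absorbing the single uncontrolled increment $G_{\ell_n}(Z_n)$ across the transition from $E((m);(\alpha))_{T_{\ell_n}}$ to $E((m);(\alpha))_{T_{\ell_n+1}}$, which is where Lemma~\ref{fgh} and the finite multiplicity of the covering $\{B(W_i,\lambda r)\}$ are indispensable.
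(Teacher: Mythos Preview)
Your overall architecture matches the paper's: reduce to the uniform boundary statement $\min_{W\in\partial E}|||F_\ell(W)|||^{2\beta}\to 1$, then transfer this to interior points via the stratification $\{E_{T_\ell}\}$, Proposition~\ref{inductiveproposition}(i), and Lemma~\ref{fgh}. Where you diverge is in the boundary saturation argument, and the gap you yourself flag is genuine and not closed by what you wrote.

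Your claim that $\|f_{(2k-1)\ell}(W)\|^{2\beta_k}$ is monotone in $\ell$ is not justified: the phase condition $f^i(W_i)\overline{\gamma_i}+\overline{f^i(W_i)}\gamma_i=0$ is imposed only at the centers $W_i$, so at a generic boundary point one has monotonicity only up to $O(\eta)$ (this is exactly the computation of $D_{2k-1}$ in the proof of Lemma~\ref{mainlemma}(iii)). More seriously, once a block crosses the threshold in Proposition~\ref{inductiveproposition}(iv), that proposition yields no further positive increment, so your appeal to ``iterating the greedy construction of the $\gamma_i$'' to drive $\|f_{(2k-1)\ell}(W)\|^2$ all the way to $a_\infty^2/2^{1/\beta_k}$ has no mechanism behind it.

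The paper closes this gap with a device you do not have: it introduces
\[
u_\ell(Z)=\sum_{k=1}^{t+1}\bigl\{u_{(2k-1)\ell}(Z)+u_{(2k)\ell}(Z)\bigr\},\qquad
u_{(j)\ell}(Z)=\min\Bigl\{\|f_{(j)\ell}(Z)\|^{2\beta_k},\ \tfrac{1}{2}\bigl(\tfrac{a_{\ell-1}^{2}}{2^{1/\beta_k}}-\varepsilon_\ell^{1/(2t+2)}\bigr)^{\beta_k}\Bigr\}.
\]
This $u_\ell$ is \emph{genuinely} nondecreasing in $\ell$: above the threshold $u_{(j)\ell}$ equals the threshold, which is increasing; below, Proposition~\ref{inductiveproposition}(iv) (in its block-wise form, as you correctly note) supplies the increment $(\tfrac{1}{4}\varepsilon_\ell^{1/(2t+2)})^{\beta_k}c$. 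One then argues by contradiction: if $\lim_\ell u_\ell(Z)<1$, then for every large $\ell$ at least one block lies below its threshold, hence some fixed block $k_0$ does so for infinitely many $\ell$; summing the increments along that subsequence and using monotonicity in between forces $u_{(2k_0-1)\ell}(Z)\to\infty$ by the divergence of $\sum_\ell\varepsilon_\ell^{\beta_{k_0}/(2t+2)}$, contradicting $u_{(2k_0-1)\ell}\le 1$. Since $u_\ell\le|||F_\ell|||^{2\beta}$ trivially, this gives $\lim|||F_\ell(W)|||^{2\beta}=1$ on $\partial E$ without ever having to saturate any individual block. On the interior side, the paper does not isolate the single increment $G_{\ell_n}$ as you do; it applies Lemma~\ref{fgh} directly to the three-term decomposition $F_{\ell-1}=F-G_\ell-\sum_{j>\ell}G_j$ and estimates both tail pieces simultaneously.
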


\begin{proof}
Proposition~{\ref{inductiveproposition}} (ii) implies that 
$|||F|||^{2\beta}\leq 1$ on $\overline{E((m);(\alpha))}$. 
By the maximal modulus principle, $|||F|||^{2\beta}<1$ on $E((m);(\alpha))$. 
Hence, $F$ is a mapping between $E((m);(\alpha))$ and $E((n);(\beta))$. 
Construct $\{T_{\ell}\}$ according to Proposition~{\ref{inductiveproposition}} (i). 

For $Z \in \pa E((m);(\alpha))$, define  
\begin{equation}
u_{(2k-1)\ell}(Z)=\mathrm{min}
\Bigl\{||f_{(2k-1)\ell}(Z)||^{2\beta_k},  \; 
\dfrac{1}{\; 2 \;}\Bigl(\dfrac{\; a^2_{\ell-1} \;}{\; 2^{1/\beta_k} \;}
-\varepsilon^{1/(2t+2)}_{\ell} \Bigr)^{\beta_k} \Bigr\}.   \nn
\end{equation}
We claim that $\{u_{(2k-1)\ell}(Z)\}$ is an increasing sequence. 
If 
$||f_{(2k-1)\ell}(Z)||^{2\beta_k} \geq 
\dfrac{1}{\; 2 \;}\Bigl(\dfrac{\; a^2_{\ell-1} \;}{\; 2^{1/\beta_k} \;}
-\varepsilon^{1/(2t+2)}_{\ell} \Bigr)^{\beta_k}$, 
then 
$u_{(2k-1)\ell}(Z)=\dfrac{1}{\; 2 \;}\Bigl(\dfrac{\; a^2_{\ell-1} \;}{\; 2^{1/\beta_k} \;}
-\varepsilon^{1/(2t+2)}_{\ell} \Bigr)^{\beta_k}$
and this defines an increasing sequence. 
If 
$||f_{(2k-1)\ell}(Z)||^{2\beta_k}<
\dfrac{1}{\; 2 \;}\Bigl(\dfrac{\; a^2_{\ell-1} \;}{\; 2^{1/\beta_k} \;}
-\varepsilon^{1/(2t+2)}_{\ell} \Bigr)^{\beta_k}$, 
then 
Proposition~\ref{inductiveproposition} (iv) implies that 
\begin{align}
u_{(2k-1)\ell}(Z) &=||f_{(2k-1)\ell}(Z)||^{2\beta_k} > ||f_{(2k-1)\ell-1}(Z)||^{2\beta_k}
+ \Bigl(\dfrac{1}{\; 4 \;}\varepsilon^{1/(2t+2)}_{\ell} \Bigr)^{\beta_k}c  \label{uincrease} \\
&\geq u_{(2k-1)\ell-1}(Z)+ \Bigl(\dfrac{1}{\; 4 \;}\varepsilon^{1/(2t+2)}_{\ell} \Bigr)^{\beta_k}c   \nn \\
&>u_{(2k-1)\ell-1}(Z).  \nn
\end{align}
Hence, in both cases, $\{u_{(2k-1)\ell}(Z) \}$ is an increasing sequence. 
The same argument shows that 
$\{u_{(2k)\ell}(Z) \}, k=1, \dots, t+1$, is an increasing sequence. 

Define 
\begin{equation}
u_{\ell}(Z)=\sum^{t+1}_{k=1}\{u_{(2k-1)\ell}(Z)+u_{(2k)\ell}(Z)\},   \nn
\end{equation}
then we can show that $\lim _{\ell \to \infty}u_{\ell}(Z)=1$ 
uniformly on $\pa E((m);(\alpha))$. 
Suppose that $\lim _{\ell \to \infty}u_{\ell}(Z)<1$ for some $Z \in \pa E((m);(\alpha))$. 
Since we have 
\begin{equation}
\lim_{\ell \to \infty}\sum^{t+1}_{k=1}
2\Bigl(\dfrac{\; a^2_{\ell-1} \;}{\; 2^{1/\beta_k} \;}-\varepsilon^{1/(2t+2)}_{\ell} \Bigr)^{\beta_k}=1,   \nn
\end{equation}
we can find a sufficiently large integer $\ell_0$ such that, 
for each $\ell>\ell_0$, there exists $k$ such that 
$u_{(2k-1)\ell}(Z)=||f_{(2k-1)\ell}(Z)||^{2\beta_k}$ or   
$u_{(2k)\ell}(Z)=||f_{(2k)\ell}(Z)||^{2\beta_k}$, namely, 
$||f_{(2k-1)\ell}(Z)||^{2\beta_k}<
\dfrac{1}{\; 2 \;}\Bigl(\dfrac{\; a^2_{\ell-1} \;}{\; 2^{1/\beta_k} \;}
-\varepsilon^{1/(2t+2)}_{\ell} \Bigr)^{\beta_k}$ 
or
$||f_{(2k)\ell}(Z)||^{2\beta_k}<
\dfrac{1}{\; 2 \;}\Bigl(\dfrac{\; a^2_{\ell-1} \;}{\; 2^{1/\beta_k} \;}
-\varepsilon^{1/(2t+2)}_{\ell} \Bigr)^{\beta_k}$ holds.  
Assume that the former inequality holds.  
There exists at least one $k$, say $k_0$, which appears infinitely many times. 
Let $L=\{\ell_1, \ell_2, \ell_3, \cdots; \ell_1<\ell_2<\ell_3< \cdots\}$ be a set of $\ell$ satisfying 
$||f_{(2k_0-1)\ell}(Z)||^{2\beta_{k_0}}<
\dfrac{1}{\; 2 \;}\Bigl(\dfrac{\; a^2_{\ell-1} \;}{\; 2^{1/\beta_{k_0}} \;}
-\varepsilon^{1/(2t+2)}_{\ell} \Bigr)^{\beta_{k_0}}$.
Then 
\begin{align}
u_{(2k_0-1)\ell}(Z)
\geq u_{(2k_0-1)\ell-1}(Z)+ \Bigl(\dfrac{1}{\; 4 \;}\varepsilon^{1/(2t+2)}_{\ell} \Bigr)^{\beta_{k_0}}c   \nn
\end{align}
holds by (\ref{uincrease}). 
Summing over indices $\ell_1, \dots, \ell_{\lambda} \in L$, then 
we get 
\begin{align}
&\sum^{\lambda-1}_{j=1}
\bigl\{u_{(2k_0-1)\ell_j}(Z)-u_{(2k_0-1)\ell_{j+1}-1}(Z)\bigr\}
+u_{(2k_0-1)\ell_{\lambda}}(Z)   \nn \\
&\geq  
u_{(2k_0-1)\ell_1-1}(Z)
+\sum^{\ell_{\lambda}}_{j=\ell_1}
\Bigl(\dfrac{1}{\; 4 \;}\varepsilon^{1/(2t+2)}_{\ell_j} \Bigr)^{\beta_{k_0}}c.  \nn
\end{align}
Since $\{u_{(2k_0-1)\ell}(Z)\}$ is a strictly increasing sequence, 
each term in the summation in the left hand side is nonpositive. 
Hence 
we obtain 
\begin{equation}
1>u_{(2k_0-1)\ell_{\lambda}}(Z) 
\geq  
u_{(2k_0-1)\ell_1-1}(Z)
+\sum^{\ell_{\lambda}}_{j=\ell_1}
\Bigl(\dfrac{1}{\; 4 \;}\varepsilon^{1/(2t+2)}_{\ell_j} \Bigr)^{\beta_{k_0}}c.   \nn
\end{equation}
Since the summation 
$\sum_{\ell}\varepsilon^{\text{max}\{\beta_k\}/(2t+2)}_{\ell}$ diverges, 
 the summation in the right hand side also diverges when $\lambda$ goes to infinity. 
This is a contradiction. 
Now we have proved 
$\lim u_{\ell}(Z)=1$ and this means that 
$\lim |||F_{\ell}(Z)|||^{2\beta} \geq \lim u_{\ell}(Z)=1$. 
On the other hand, since $F=\lim F_{\ell}$ is a mapping to $E((n);(\beta))$, we have 
$\lim |||F_{\ell}(Z)|||^{2\beta} \leq 1$. 
Hence, we obtain $\lim |||F_{\ell}(Z)|||^{2\beta}=1$ 
uniformly on $\pa E((m);(\alpha))$. 

Let $Z \in \overline{E_{T_{\ell+1}}} \backslash \overline{E_{T_{\ell}}}$. 
We put 
\begin{align}
& F_{\ell-1}(Z)=(f_{(1)\ell-1}(Z), \dots, f_{(2t+2)\ell-1}(Z), h(Z)), \nn  \\
&G_{\ell}(Z)=(g_{(1)\ell}(Z), \dots, g_{(2t+2)\ell}(Z), 0), \nn \\
&\sum^{\infty}_{j=\ell+1}G_{j}(Z)=(g^{\infty}_{(1)\ell+1}(Z), \dots, g^{\infty}_{(2t+2)\ell+1}(Z), 0),  \nn \\
&F(Z)=(f_{(1)}(Z), \dots, f_{(2t+2)}(Z), h(Z)).  \nn
\end{align}
Applying Lemma~{\ref{fgh}} to each component of $F_{\ell-1}=F-G_{\ell}-\sum^{\infty}_{j=\ell+1}G_{j}$, 
we obtain, for $k=1, \dots, t+1$, 
\begin{align}
&||f_{(2k-1)\ell-1}(Z)||^{2\beta_k} \label{f2k-1infty}  \\
& \quad \leq 
||f_{(2k-1)}(Z)||^{2\beta_k} 
+\sum_{\substack{|\mu|=2\beta_{k} \\ \mu_1\not=2\beta_k}}
\dfrac{\; (2\beta_k)! \;}{\mu \; !}
||-g_{(2k-1)\ell}||^{\mu_2}||-g^{\infty}_{(2k-1)\ell+1}||^{\mu_3},    \nn 
\end{align}
and, for $k=1, \dots, t$, 
\begin{align}
&||f_{(2k)\ell-1}(Z)||^{2\beta_k}  \label{f2kinfty}\\
& \quad \leq 
||f_{(2k)}(Z)||^{2\beta_k} 
+\sum_{\substack{|\mu|=2\beta_{k} \\ \mu_1\not=2\beta_k}}
\dfrac{\; (2\beta_k)! \;}{\mu \; !}
||-g_{(2k)\ell}||^{\mu_2}||-g^{\infty}_{(2k)\ell+1}||^{\mu_3}   \nn 
\end{align}
and 
\begin{align}
& ||f_{(2t+2)\ell-1}(Z)||^{2}+||h(Z)||^2 \label{f2t+2infty}\\
& \quad \leq 
||f_{(2t+2)}(Z)||^{2}+||h(Z)||^2 
+\sum_{\substack{|\mu|=2 \\ \mu_1\not=2}}
\dfrac{2}{\; \mu \; ! \;}
||-g_{(2t+2)\ell}||^{\mu_2}||-g^{\infty}_{(2t+2)\ell+1}||^{\mu_3}.   \nn 
\end{align}
Summing (\ref{f2k-1infty}), (\ref{f2kinfty}) and (\ref{f2t+2infty}), 
we obtain 
\begin{align}
&\mathrm{min}_{W \in \pa E((m);(\alpha))}
|||F_{\ell-1}(W)|||^{2\beta}-2^{-\ell} \label{ellinfty}\\
&\leq |||F_{\ell-1}(Z)|||^{2\beta}  \nn \\
&\leq 
|||F(Z)|||^{2\beta}  \nn \\
&\qquad 
+\sum^{t+1}_{k=1}\sum_{\substack{|\mu|=2\beta_{k} \\ \mu_1\not=2\beta_k}}
\dfrac{\; (2\beta_k)! \;}{\; \mu \; ! \;}
\Bigl\{
||-g_{(2k-1)\ell}(Z)||^{\mu_2}||-g^{\infty}_{(2k-1)\ell+1}(Z)||^{\mu_3}   \nn \\
&\qquad \qquad \qquad \qquad \qquad 
+||-g_{(2k)\ell}(Z)||^{\mu_2}||-g^{\infty}_{(2k)\ell+1}(Z)||^{\mu_3}
\Bigr\}.   \nn
\end{align}
The first inequality is a property of Proposition~\ref{inductiveproposition} (i). 
In order to prove the summation-term in (\ref{ellinfty}) goes to zero 
when $\ell$ goes to infinity, 
we divide the summation into the cases $\mu_2=0$ and $\mu_2\not=0$. 

(I) $\mu_2=0$. 
Since we have the following inequalities 
\begin{align}
&||-g^{\infty}_{(2k-1)\ell+1}(Z)||^{\mu_3}    \nn \\
&=\Bigl[
\bigl|\sum^{\infty}_{j=\ell+1}g^{2N_{k-1}+1}_{j}(Z)\bigr|^2+\dots+
\bigl|\sum^{\infty}_{j=\ell+1}g^{2N_{k-1}+n_k}_{j}(Z)\bigr|^2
\Bigr]^{\mu_3/2}  \nn \\
&<
\Bigl[
\bigl\{\sum^{\infty}_{j=\ell+1}|g^{2N_{k-1}+1}_{j}(Z)| \bigr\}^2+\dots+
\bigl\{\sum^{\infty}_{j=\ell+1}|g^{2N_{k-1}+n_k}_{j}(Z)| \bigr\}^2
\Bigr]^{\mu_3/2}  \nn \\
&<\Bigl\{
\sum^{\infty}_{j=\ell+1}\sum^{2N_{k-1}+n_k}_{i=2N_{k-1}+1}
|g^{i}_{j}(Z)|
\Bigr\}^{\mu_3}  \nn  \\
&<\bigl\{ \sum^{\infty}_{j=\ell+1}\varepsilon_j 
\bigr\}^{\mu_3}, \nn
\end{align}
the summation-term in (\ref{ellinfty}) with $\mu_2=0$ is estimated as 
\begin{align}
&\sum^{t+1}_{k=1}\sum_{\substack{|\mu|=2\beta_{k} \\ \mu_1\not=2\beta_k  \\  \mu_2=0}}
\dfrac{\; (2\beta_k)! \;}{\; \mu \; ! \;}
\Bigl\{||-g^{\infty}_{(2k-1)\ell+1}(Z)||^{\mu_3} 
+||-g^{\infty}_{(2k)\ell+1}(Z)||^{\mu_3}
\Bigr\}  \nn  \\
&<\sum^{t+1}_{k=1}\sum_{\substack{|\mu|=2\beta_{k} \\ \mu_1\not=2\beta_k  \\  \mu_2=0}}
\dfrac{\; (2\beta_k)! \;}{\; \mu \; ! \;}
\bigl\{ 2\sum^{\infty}_{j=\ell+1}\varepsilon_j 
\bigr\}^{\mu_3}. \nn
\end{align}
Since $\sum \varepsilon^{1/2}_j$ converges, 
this goes to zero when $\ell$ goes to infinity. 

(II)$\mu_2\not=0$. 
By Proposition~{\ref{inductiveproposition}} (iii), 
if $\ell$ is sufficinetly large, 
we have the inequalities 
$||-g_{(2k-1)\ell}(Z)||^{\mu_2}<\varepsilon^{\mu_2}_{\ell}$ 
and 
$||-g_{(2k)\ell}(Z)||^{\mu_2}<\varepsilon^{\mu_2}_{\ell}$. 
Since 
$||-g^{\infty}_{(2k)\ell+1}(Z)||^{\mu_3} \leq 1$ and $||-g^{\infty}_{(2k-1)\ell+1}(Z)||^{\mu_3} \leq 1$, 
we obtain 
\begin{align}
&\sum^{t+1}_{k=1}\sum_{\substack{|\mu|=2\beta_{k} \\ \mu_1\not=2\beta_k}}
\dfrac{\; (2\beta_k)! \;}{\; \mu \; ! \;}
\Bigl\{
||-g_{(2k-1)\ell}(Z)||^{\mu_2}||-g^{\infty}_{(2k-1)\ell+1}(Z)||^{\mu_3}   \nn  \\
&\qquad \qquad \qquad \qquad 
+||-g_{(2k)\ell}(Z)||^{\mu_2}||-g^{\infty}_{(2k)\ell+1}(Z)||^{\mu_3}
\Bigr\}  \nn  \\
&<\sum^{t+1}_{k=1}\sum_{\substack{|\mu|=2\beta_{k} \\ \mu_1\not=2\beta_k}}
\dfrac{\; (2\beta_k)! \;}{\; \mu \; ! \;}
2\varepsilon^{\mu_2}_{\ell}.   \nn
\end{align}
Since $\sum \varepsilon^{1/2}_j$ converges, 
this goes to zero when $\ell$ goes to infinity. 
In both cases, the summation-term in (\ref{ellinfty}) goes to zero. 
Hence the fact that 
$\lim |||F_{\ell}(Z)|||^{2\beta}=1$ 
uniformly on $\pa E((m);(\alpha))$ 
implies that 
$|||F(Z)|||^{2\beta}$ tends to one when 
$Z \in E((m); (\alpha))$ goes towards the boundary. 
This means that $F(Z)$ is a proper holomorphic mapping.  
\end{proof}

\section {Construction of non-extendable proper holomorphic mapping}
In this section, we construct a proper holomorphic mapping between generalized complex pseudoellipsoids 
which does not extend continuously to the closure of the source domain. 

First, let $u$ be a continuous function on the circle $\pa \Delta$ and $\tilde{u}$ its 
harmonic extension to $\Delta$. 
Denote by $\tilde{v}$ and $v$ its harmonic conjugate and its boundary value. 
Then we can choose a continuous function $u$ such that the function $v$ is not continuous. 
Put $h(Z)=\iota(Z, \text{exp}(\tilde{u}(Z)+i\tilde{v}(Z)))$ for a sufficiently small $\iota>0$,  
then we can make the norm $||h(Z)||$ arbitrary small. 
Now we apply Proposition~{\ref{inductiveproposition}} to $h$ to obtain 
a mapping $F : E((m);(\alpha)) \to E((n);(\beta))$, which is a proper holomorphic mapping according to 
Theorem~{\ref{proper}} for $p=M_{s+1}+1$.  
Clearly, $F$ does not extend continuously to $\overline{E((m);(\alpha))}$, which is the desired mapping.


\begin{thebibliography}{99}

\bibitem[BC]{BC}S~.Bell, ~D.~Catlin, {\itshape Boundary regularity of proper holomorphic mappings} 
Duke~Math.~J.~49,\\~(1982) 385--396
\bibitem[D]{D}A.~Dor, {\itshape Proper holomorphic maps between balls in one co-dimension}, 
              Arkiv for Mathematik vol.~28, no.~1, (1990) 49--100
\bibitem[DF]{DF}K.~Diederich, ~J.~E.~Fornaess, {\itshape Boundary regularity of proper holomorphic mappings}, 
                   Invent.~Math.~67,~ (1982) 363--384
\bibitem[F]{F}F.~Forstneric, {\itshape Embedding strictly pseudoconvex domains into balls}, Trans.~Amer.~Math.~Soc.\\~295~(1986)~no.1,~347-368
\bibitem[H]{H}X.~Huang, {\itshape On the linearity problem for proper 
          holomorphic maps between balls in complex spaces of different dimensions}
          J.~Diff.~Geom.~51,~(1999)~13-33
\bibitem[G]{G}J.~Globevnik, {\itshape Boundary continuity of complete proper holomorphic maps}
             J.~Math.~Anal.\\~Appl.~424~(2015) 824--825 

\end{thebibliography}
\end{document}